\newtheorem{theorem}{Theorem}[section]
\newtheorem{lemma}{Lemma}[section]
\newtheorem{corollary}{Corollary}[section]
\theoremstyle{definition}
\newtheorem{example}{Example}[section]
\begin{document}

\title{The Discrepancy Principle for Choosing Bandwidths in Kernel Density Estimation}
\author{Thoralf Mildenberger\thanks{This work has been supported
by the Collaborative Research Center {\em Statistical modelling of nonlinear
dynamic processes} (SFB 823, Project C1) of the German Research Foundation (DFG).}\\
Department of Mathematics, Physics, and Computer Science \\
University of Bayreuth \\ thoralf.mildenberger@uni-bayreuth.de}
\date{\today}

\maketitle

\begin{abstract}
We investigate the discrepancy principle for choosing smoothing parameters 
for kernel density estimation.  The method is based on
 the distance between the empirical and estimated distribution functions.
We prove some new positive and negative results on $L_1$-consistency of kernel estimators with bandwidths
chosen using the discrepancy principle. Consistency crucially depends on
a rather weak H\"older condition on the distribution function. 
We also unify and extend previous results on the behavior of the chosen bandwidth under
more strict smoothness assumptions. Furthermore, we
compare the discrepancy principle to standard methods in a simulation study. 
Surprisingly, some of the proposals work reasonably well over a large set of different densities and sample sizes, and
the performance of the methods at least up to $n=2500$ can be quite different from their asymptotic behavior.   
\end{abstract}

\section{Introduction}

We investigate the {\em discrepancy principle}, a simple method for choosing 
the bandwidth in kernel density 
estimation which -- unlike most other methods like cross-validation or plug-in estimates -- does not 
directly aim at minimizing the risk. 

In the following, let $X_1,\dots,X_n$ denote iid random variables having a distribution 
with Lebesgue density $f$ and distribution function $F$. We denote the empirical distribution function 
by $F_n$. 

A function $K:\mathbb{R}\longrightarrow \mathbb{R}$ is called a {\em kernel} of order $\ell$ for 
 $\ell \in \mathbb{N}$, if $u^jK(u) \in L_1(\mathbb{R})$ for $j=0,\dots,\ell$ and
\begin{align*}
\int u^j K(u)du & = \begin{cases}
                   1 & (j=0) \cr
		  0 & (j=1,\dots,\ell-1) \cr
		  k_\ell \in \mathbb{R}\setminus\{0\} & (j=\ell)
\end{cases}.
\end{align*}

For a kernel $K$ and $h>0$ we define $K_h(u):= h^{-1}K(h^{-1}u)$. 
We denote the distribution function associated with $K$ 
(which is not necessarily monotone if $K$ is not a probability density) by $\mathbb{K}$.
For iid random variables $X_1,\dots,X_n$, a kernel $K$ of order $\ell \in \mathbb{N}$ 
and a {\em bandwidth} $h>0$ the function $x \longrightarrow \hat f_h(x)$ given by
\begin{align*}
\hat f_h (x) := \frac{1}{nh} \sum_{i=1}^n K \left(\frac{x-X_i}{h} \right) = \frac{1}{n} \sum_{i=1}^n K_h (x-X_i)
\end{align*}
is called the {\em kernel density estimator}. A corresponding {\em kernel estimator of the distribution function} 
is given by
\begin{align}
\hat F_n^h (x) := \int_{-\infty}^x \hat f_h (t) dt = \frac{1}{n} \sum_{i=1}^n
\mathbb{K} \left(\frac{x-X_i}{h} \right) = (F_n \ast K_h)(x). \label{kerdistf}
\end{align}

More important than the choice of the kernel is the choice of the bandwidth $h$. Depending on
the risk function and on additional assumptions on $f$, often an explicit expression for the 
(at least asymptotically) optimal value can be derived. However, it necessarily depends
on some functionals of the unknown true density $f$. 
Most parameter choice strategies used in practice aim at minimizing the risk. In contrast, the strategies 
considered here are based on a measure of distance between the empirical and estimated distribution
functions, i.e. a direct comparison of the estimate with the data. 

In the following, by the 
{\em discrepancy principle for choosing the bandwidth for kernel density estimators} we mean that 
 $h$ is chosen such that
\begin{align}
d(F_n, \hat F_n^h) = s(n). \label{dpgleich}
\end{align}
The {\em threshold function} $s:\mathbb{N}\longrightarrow \mathbb{R}^+$ depends on $n$ only and fulfills
$s(n) = o(1)$ for $n \rightarrow \infty$. For the distance $d$ between distribution functions we will
always take the Kolmogorov or (generalized) Kuiper distances although, in principle,  other metrics could
be used. The different suggestions in the previous literature differ in their choices of $s(n)$ and $d$
and possibly in their prescriptions for the selection of a solution of (\ref{dpgleich})
in case there are multiple solutions.   

The discrepancy principle was first introduced by \citet{Mor66solu} in the context of
 (deterministic) inverse problem theory, where it is one of the most 
widely known methods for choosing a regularization parameter. In Statistical Learning Theory, the connection 
between nonparametric statistics and ill-posed problems is strongly emphasized, 
and already in the seventies density estimation was recognized as being closely related to the 
problem of numerical differentiation, which is an ill-posed problem. Methods adapted from
deterministic inverse problem theory as well as using the discrepancy principle for choosing their
smoothing parameters have been suggested by \cite{VapSte78density} and \cite{AidVap89dens}, 
see also Chapter 7 in \cite{VapLearning} and Chapter 7 in \cite{VapNature} for 
detailed accounts. 

Variants of the discrepancy principle (but under different names) have also independently 
been proposed in the context of the so-called {\em Data Features} or {\em Data Approximation} 
approach \citep{Dav95data, Dav08data} which has its roots in robust statistics and exploratory data analysis. 
The main idea is to choose the simplest  estimate (with simplicity e.g. measured by smoothness) 
that is sufficiently close to the data.
Several procedures for density estimation based on these ideas have been proposed, including methods based on 
kernel density estimators \citep{Dav95data}, regular histograms \citep{DavGatNor09histogram} and 
the taut-string estimator \citep{DavKov04dens}.

The discrepancy principle has also been used in a few other approaches to density
 estimation.  \cite{ELRdisc} suggest a version for kernel density estimation 
that chooses a bandwidth of the optimal order  
under standard assumptions; see also \citet[Ch. 7.6]{EggLaR1}. 
The same authors also use their method for
choosing a penalty parameter in a penalized-likelihood approach \citep[Ch. 7.7]{EggLaR1} and in a density deconvolution 
method \citep{ELRdeconv}. 

The different variants of the discrepancy principle for density estimation mentioned above have largely been suggested
 independently of each other,
and to our knowledge, there has never been a systematic investigation of this approach.  

In Section 2, we show that a solution of (\ref{dpgleich}) exists under very weak conditions, and we
show that the almost sure $L_1$-consistency of the resulting kernel density estimate mainly depends on a rather mild
H\"older condition on the distribution function $F$. This condition is, for example, fulfilled for all 
square-integrable densities provided that the threshold function $s$ decays slowly enough. We also 
give sufficient conditions for the resulting estimator to be inconsistent. 
In Section 3 we extend and unify some known results on the exact order of the chosen bandwidth.
 Furthermore, we compare  
different versions of the discrepancy principle with standard methods of smoothing parameter
selection in a simulation study (Section 4). 
The methods can behave quite differently to what is predicted by the asymptotic results 
even for sample sizes up to at least $n=2500$. This is not so much of a surprise as the asymptotics are
mostly based on the law of the iterated logarithm for the empirical distribution function.
 Indeed, some versions of the discrepancy principle that were previously 
suggested in the literature perform reasonably well over a wide range of different densities, 
while others suffer from oversmoothing for these sample sizes, although they are
guaranteed to undersmooth asymptotically. 
The last section contains some concluding remarks.

\section{Existence and consistency}

First, we investigate the existence of a solution of (\ref{dpgleich}). We measure the distance between two distribution functions  
$F$ and $G$ either by the {\em Kolmogorov distance}
$$ d_\infty(F,G):= \| F - G \|_\infty $$ or by the {\em $k$-th order Kuiper distance} (for $k \in \mathbb{N}$) first introduced in \cite{DavKov04dens} and defined by
$$ d_{kuip,k}(F,G) := \sup_{a_1 \leq b_1 \leq a_2 \leq b_2 \leq \dots \leq a_k
\leq b_k}
\sum_{i=1}^k | (F(b_i) - F(a_i))-(G(b_i) - G(a_i))| .$$
For a continuous probability distribution function $F$ and the empirical distribution $F_n$ of a sample of size $n$ drawn from $F$, the distributions of 
$d_\infty(F_n,F)$ and $d_{kuip,k}(F_n,F)$ do not depend on $F$. 
For $k=1$ we obtain the usual Kuiper distance.
All these distances are topologically equivalent and it is easy to see that
\begin{align*}
d_\infty(F,G) \leq d_{kuip,k}(F,G) \leq 2k d_\infty(F,G).
\end{align*}
In the following, we always have $d=d_\infty$ or $d=d_{kuip,k}$ for some $k \in \mathbb{N}$, and we define 
\begin{align*}
c_d=\begin{cases} 1, & d=d_\infty \cr 2k, &
d=d_{kuip,k}
\end{cases}.
\end{align*}
It should be noted that, since we allow for higher order kernels, some distribution
functions do not correspond to probability measures but to signed measures.

For a kernel $K$ with associated distribution function $\mathbb{K}$, we define
$$\kappa_0 := \sup_{x \in \mathbb{R}} | \mathbb{K}(x) - F_0(x)|,  $$
where $F_0(x):=\mathbb{I}(x \geq 0)$ is the distribution function of the Dirac measure in $0$.
In case $K$ is a probability density, we have $\kappa_0=\max
\{\mathbb{K}(0),1-\mathbb{K}(0)\}$. If $K$ is also symmetric around zero, he have $\kappa_0=\mathbb{K}(0)=1/2$. 

\begin{figure}[t]
\begin{center}
\includegraphics[width=11cm,height=9cm]{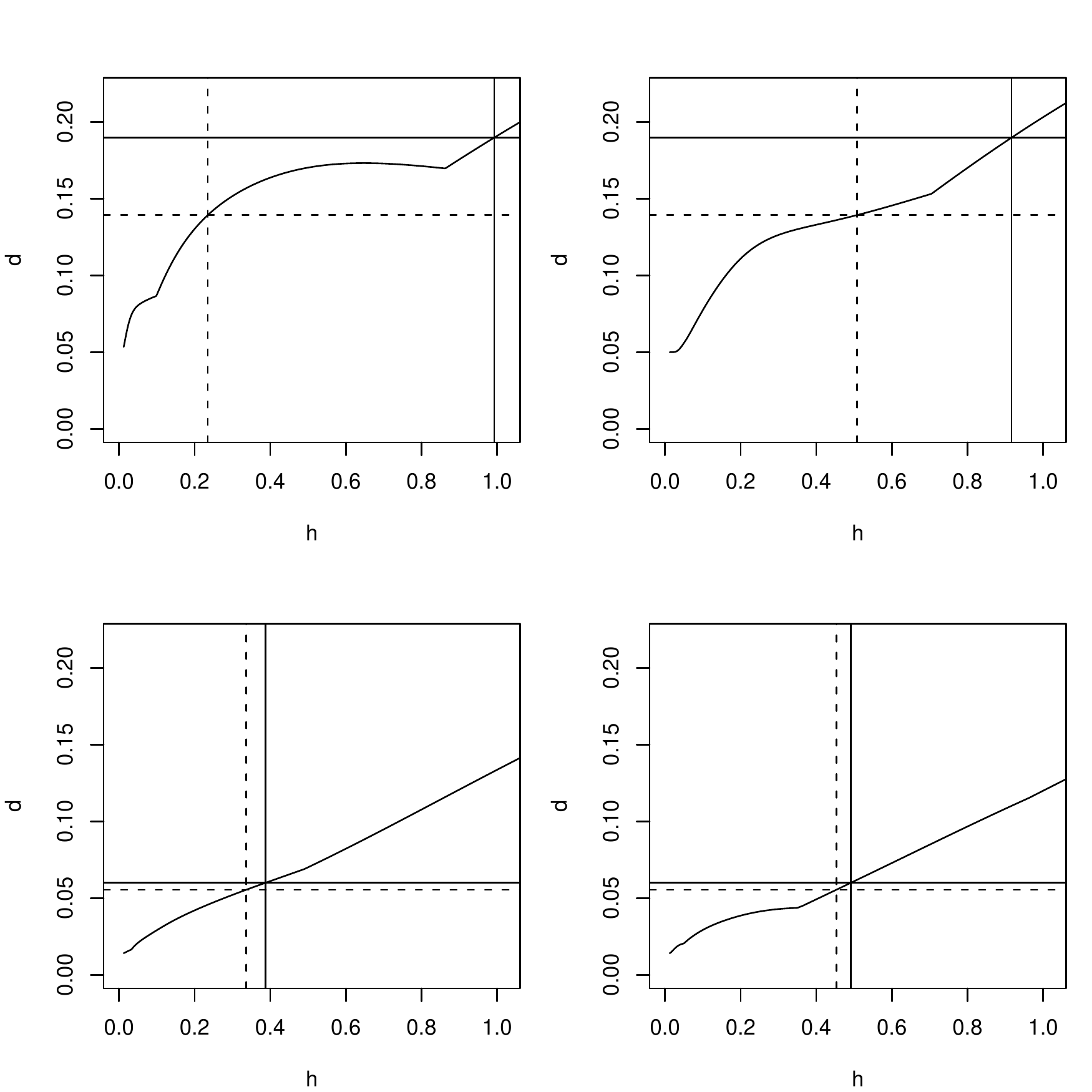}
\caption{Solutions of $d_\infty(\hat F_n^h, F_n)=s(n)$ using
a standard Gaussian kernel for $X_1,\dots,X_n \sim N(0,1)$. Top row: $n=10$, bottom row: $n=100$.
Straight lines: $s(n)=0.6n^{-1/2}$, broken lines:
$s(n)=0.35n^{-2/5}$.}\label{kerdpbild}
\end{center}
\end{figure}

The following lemma shows that, almost surely, for 
fixed $n$, the function $h \longrightarrow d(F_n,\hat F_n^h)$ is continuous and must -- under weak conditions on $s$ -- 
take the value $s(n)$ for at least one $h$ if $n$ is large enough. An analogous statement has been proved by  \cite{ELRdisc,EggLaR1} for the special case of a 
symmetric, nonnegative kernel of of order $2$ and $d=d_\infty$. The proof can be found in \citet{Diss}, pp. 27-28.

\begin{lemma}\label{hexist}
For $F_n$ an empirical distribution function of an iid sample from a distribution
with continuous distribution function and $\hat F_n^h$ as in 
(\ref{kerdistf}) we have almost surely:
\begin{enumerate}
\item $d(F_n,\hat F_n^h)$ is continuous in $h$.
\item $\liminf_{h \rightarrow 0} d(F_n,\hat F_n^h) \leq c_d \frac{\kappa_0}{n}$.
\item $\limsup_{h \rightarrow \infty} d(F_n,\hat F_n^h) \geq \kappa_0$.
\end{enumerate}
\end{lemma}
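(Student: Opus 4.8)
The plan is to treat each of the three assertions separately, exploiting the elementary representation $\hat F_n^h = F_n \ast K_h$ from (\ref{kerdistf}).

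First, for continuity in $h$: I would fix a realization of the sample with the $X_i$ pairwise distinct (which holds almost surely since $F$ is continuous) and observe that for each fixed $x$ the map $h \mapsto \hat F_n^h(x) = \tfrac1n\sum_i \mathbb{K}((x-X_i)/h)$ is continuous, since $\mathbb{K}$ is continuous (it is an integral of the $L_1$ function $K$). The delicate point is to pass from pointwise continuity to continuity of the supremum functional $d(F_n,\hat F_n^h)$. For $d_\infty$ this follows because $h \mapsto \hat F_n^h$ is continuous uniformly in $x$ on compact sets of $h$ bounded away from $0$; near a fixed $h_0>0$ one can bound $\|\hat F_n^{h}-\hat F_n^{h_0}\|_\infty$ using the modulus of continuity of $\mathbb{K}$ together with the fact that $\mathbb{K}(\pm\infty)$ are finite, so the tails are controlled. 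For the Kuiper distances one reduces to the Kolmogorov case via the sandwich $d_\infty \le d_{kuip,k} \le 2k\,d_\infty$ already stated, or argues directly that each increment $(\hat F_n^h(b_i)-\hat F_n^h(a_i))$ is jointly continuous and the sup over the ordered tuples inherits continuity. This step is mostly bookkeeping once one notes $\mathbb K$ is bounded and continuous with limits $0$ and $1$ at $\mp\infty$ (up to the higher-order correction captured in $\kappa_0$).

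For the second assertion I would examine the behavior as $h \downarrow 0$. Since $K_h$ concentrates, $\hat F_n^h = F_n\ast K_h \to F_n$ at every continuity point of $F_n$, but the convergence is not uniform near the jumps of $F_n$ (which sit at the $X_i$). Near the jump at $X_i$ the local picture is exactly a rescaled copy of $\mathbb{K}$ versus the step $F_0$, so the local discrepancy is governed by $\tfrac1n\kappa_0$. The point of the $\liminf$ (rather than $\lim$) is that one can choose a sequence $h_m\downarrow 0$ along which the ``worst'' location lands where the rescaled kernel and the step happen to align favorably; more simply, one shows $\liminf_{h\to 0} d_\infty(F_n,\hat F_n^h)\le \kappa_0/n$ by evaluating at points drifting toward a single $X_i$ and using that the contributions of the other $n-1$ atoms are smoothed out to be negligible along a suitable subsequence, while the local error at $X_i$ is at most $\kappa_0/n$ in the limit. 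The factor $c_d$ accounts for the Kuiper case via the same sandwich inequality. I expect this is the step requiring the most care, precisely because the naive limit does not exist and one must argue along subsequences, handling the interaction of the $n$ smoothed jumps.

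For the third assertion, as $h\to\infty$ the estimator flattens out: $\hat F_n^h(x)=\tfrac1n\sum_i\mathbb{K}((x-X_i)/h)$, and for $x$ fixed while $h\to\infty$ every argument $(x-X_i)/h\to 0$, so $\hat F_n^h(x)\to \mathbb{K}(0^{\pm})$-type values; more usefully, evaluating near $x$ far to the left or right, one sees $\hat F_n^h$ stays close to $\mathbb{K}(\text{small})$ over long ranges while $F_n$ has already jumped from $0$ to $1$. Concretely I would pick $x$ near the smallest $X_i$ (or exploit a point where $F_n$ jumps) and compare $F_n$ there, which is $\ge$ some fixed positive value, with $\hat F_n^h$ there, which $\to \mathbb{K}(0)$ or the relevant limit; taking the sup over $x$ and letting $h\to\infty$ yields a lower bound of $\kappa_0$ because $\kappa_0=\sup_x|\mathbb{K}(x)-F_0(x)|$ captures exactly this worst-case gap between the flattened kernel CDF and a single step. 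Since $d\ge d_\infty$ in all cases considered, the bound $\limsup_{h\to\infty}d(F_n,\hat F_n^h)\ge\kappa_0$ follows. This part is the easiest of the three.
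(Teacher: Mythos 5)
The paper defers its own proof of this lemma to the dissertation, so I can only judge your argument on its merits; the architecture you propose (continuity via uniform continuity of $\mathbb{K}$, small-$h$ behaviour via localization at the atoms, large-$h$ behaviour via flattening against a single step) is the natural one, and part 1 is essentially complete as you describe it. Two points need repair. In part 2 you argue in the wrong direction: to bound the supremum $d_\infty(F_n,\hat F_n^h)$ from above you cannot ``evaluate at points drifting toward a single $X_i$'' --- that is how one proves \emph{lower} bounds on a sup. What is needed is a bound on $|F_n(x)-\hat F_n^h(x)|=\frac1n\bigl|\sum_j\bigl(\mathbb{K}((x-X_j)/h)-F_0(x-X_j)\bigr)\bigr|$ that is uniform in $x$: writing $\delta>0$ for the minimal spacing of the (a.s.\ distinct) observations, any $x$ has at most one $X_j$ within $\delta/2$, whose term is bounded by $\kappa_0$ by definition of $\kappa_0$, while every other term is bounded by $\sup_{|u|\ge\delta/(2h)}|\mathbb{K}(u)-F_0(u)|\to0$ as $h\to0$, since $\mathbb{K}$ tends to $0$ and $1$ at $-\infty$ and $+\infty$. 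This yields $\limsup_{h\to0}d_\infty(F_n,\hat F_n^h)\le\kappa_0/n$ directly; no subsequence is needed, and the $\liminf$ in the statement is simply all that the later intermediate-value argument requires.

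In part 3 your argument as written only delivers $\max\{|\mathbb{K}(0)|,1-\mathbb{K}(0)\}$: evaluating at a fixed $x$ makes every argument $(x-X_i)/h\to0$, so $\hat F_n^h(x)\to\mathbb{K}(0)$. For a nonnegative kernel this maximum equals $\kappa_0$, but the lemma is stated for general (possibly higher-order, signed) kernels, for which $\mathbb{K}$ is nonmonotone and the supremum defining $\kappa_0$ may be attained at some $u^*\ne0$. The fix is to let the evaluation point scale with $h$: pick $u^*$ with $|\mathbb{K}(u^*)-F_0(u^*)|\ge\kappa_0-\varepsilon$ and set $x_h=X_{(1)}+hu^*$ if $u^*<0$ (so $F_n(x_h)=0$) or $x_h=X_{(n)}+hu^*$ if $u^*\ge0$ (so $F_n(x_h)=1$); then $(x_h-X_i)/h\to u^*$ for every $i$, hence $\hat F_n^h(x_h)\to\mathbb{K}(u^*)$ by continuity of $\mathbb{K}$, giving $\liminf_{h\to\infty}d_\infty(F_n,\hat F_n^h)\ge\kappa_0-\varepsilon$. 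With these two repairs the proof goes through; your reduction of the Kuiper cases to $d_\infty$ via $d_\infty\le d_{kuip,k}\le 2k\,d_\infty$ is used correctly in all three parts.
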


Lemma \ref{hexist} shows that if $s(n)=o(1)$ and $n^{-1}=o(s(n))$ the equation $d(F_n,\hat F_n^h) =
s(n)$ almost surely has at least one solution $h_{s,n}$ for sufficiently large $n$. 
These conditions are fulfilled by the threshold functions previously 
proposed in the literature. Moreover, the minimum sample size that guarantees 
existence of at least one solution can be calculated explicitly since it 
depends on $s(n)$ and $\kappa_0$
only, and not on the sample or on the underlying true distribution (assuming there are no ties, which holds true almost surely). 
For example, if $s(n)=0.6n^{-1/2}$ as proposed by \citet[Ch. 7.9]{VapLearning}
or $s(n)=0.35n^{-2/5}$ as proposed in \citet{ELRdisc}, $d=d_\infty$ and $K$ is any symmetric probability density, we have that $s(n) \in [\frac{1}{2n},\frac{1}{2}]$ for  
$n\geq 2$, so existence of the bandwidth can be guaranteed if there are at least two data points. As already noted by \citet{ELRdisc}, the
function $h \longrightarrow d(F_n,\hat F_n^h)$ is not necessarily monotone, 
so that the bandwidth chosen according to the discrepancy principle is 
not necessarily unique; \cite{ELRdisc} suggest using the smallest solution while the Data Approximation approach would suggest using the largest one.
However, none of the results given subsequently depends on the particular choice of the solution, and multiple solutions seem to 
occur only rarely in larger samples. 

Figure \ref{kerdpbild} shows shows two realizations each for $n=10$ and $n=100$.
 The samples were drawn from a standard normal distribution 
and the Gaussian kernel was used. 
The horizontal lines correspond to the two different choices of the threshold 
functions mentioned above. The solution $d(F_n,\hat F_n^h) = s(n)$ can be 
computed numerically since the function $h \rightarrow d(F_n,\hat F_n^h)$ is continuous.
In \cite{ELRdisc}, a secant method is proposed for solving this equation, but we use the
related regula falsi which we found to be more stable. The possibility of using 
an iterative method makes selection of the bandwidth using the discrepancy quite fast
in comparison to other methods (like cross-validation) where one usually has to evaluate
some criterion on a grid of possible bandwidths. In addition, well-known 
formulas exist for calculating Kolmogorov- and Kuiper-distances (for $k=1$) for two
distribution functions and these can be applied if $K$ is a probability density.

In the following, we frequently need the function 
$$F_h:=F\ast K_h .$$ In case 
$K_h$ is a probability density, $F_h$ is a probability distribution function, otherwise it is the distribution function of a signed measure.

The proof of the following Lemma is based on basic properties of convolutions and the Law of the Iterated Logarithm, see
\cite{Diss}, p. 29, for details.

\begin{lemma} With probability $1$, 
\begin{enumerate}
\item $d(F_n,F)=O\left((\log\log n/n)^{1/2}\right)$ and
\item $d(\hat F^h_n,F_h) =O\left((\log\log n/n)^{1/2}\right)$ uniformly in $h$.
\end{enumerate}
\end{lemma}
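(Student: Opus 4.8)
\emph{Proof sketch.} The plan is to obtain part~1 directly from a classical law of the iterated logarithm and then to reduce part~2 to part~1 by a convolution estimate that is uniform in the bandwidth.

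For part~1 I would invoke the Chung--Smirnov form of the LIL for the empirical distribution function: for a continuous $F$ one has $\limsup_{n\to\infty}(2n/\log\log n)^{1/2}\,d_\infty(F_n,F)=1$ almost surely, so that $d_\infty(F_n,F)=O((\log\log n/n)^{1/2})$ a.s. Since $d_{kuip,k}(F_n,F)\le 2k\,d_\infty(F_n,F)$, the same order bound carries over to every admissible distance, i.e.\ $d(F_n,F)=O((\log\log n/n)^{1/2})$ a.s.

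For part~2 the key identity is $\hat F_n^h-F_h=(F_n-F)\ast K_h$, which holds because $\hat F_n^h=F_n\ast K_h$ by (\ref{kerdistf}) and $F_h=F\ast K_h$ by definition. Writing the convolution out via the substitution $t=hu$ gives $(\hat F_n^h-F_h)(x)=\int_{\mathbb{R}}(F_n-F)(x-hu)\,K(u)\,du$. Differencing this at two points and summing over an ordered configuration $a_1\le b_1\le\dots\le a_k\le b_k$ — the shifted points $a_i-hu\le b_i-hu\le\dots$ remain ordered for each fixed $u$, so the inner sum is bounded by $d_{kuip,k}(F_n,F)$ — and then interchanging the sum and supremum with the integral, one obtains in one stroke
\begin{align*}
d(\hat F_n^h,F_h)\ \le\ \left(\int_{\mathbb{R}}|K(u)|\,du\right) d(F_n,F)\qquad\text{for every }h>0,
\end{align*}
valid both for $d=d_\infty$ (this case is just Young's inequality for convolutions) and for $d=d_{kuip,k}$. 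Because $K\in L_1(\mathbb{R})$ — it is a kernel of order $\ell$ — the constant $\int_{\mathbb{R}}|K(u)|\,du$ is finite and, crucially, does not depend on $h$; combining this with part~1 yields exactly the assertion of part~2, uniformly in $h$.

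I do not expect a genuine obstacle: the only non-routine ingredient is the Chung--Smirnov LIL, which I would simply cite, and everything else is the one-line convolution bound above. The points that deserve a moment's care are that $\int|K|<\infty$ (built into the definition of a kernel, even a higher-order one) and that the constant in the convolution estimate is independent of $h$, which is precisely what upgrades the pointwise-in-$h$ bound to a uniform one. If one prefers to reach the convolution identity by integration by parts rather than by the substitution above, one must additionally check that the boundary terms vanish, which they do since $F_n-F\to0$ at $\pm\infty$ while $\mathbb{K}$ stays bounded (tending to $0$ and $1$ at $\mp\infty$) even when $K$ is not a probability density and $\mathbb{K}$ is not monotone.
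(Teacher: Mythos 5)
Your proposal is correct and follows the same route the paper indicates for this lemma (the paper defers the details to the dissertation but states the proof rests on the Law of the Iterated Logarithm and basic convolution properties): the Chung--Smirnov LIL gives part 1, and the identity $\hat F_n^h-F_h=(F_n-F)\ast K_h$ together with the $h$-independent bound $d(\hat F_n^h,F_h)\le\|K\|_1\,d(F_n,F)$ gives part 2 uniformly in $h$. Your careful check that the ordered configuration stays ordered after the shift by $hu$, so the Kuiper case goes through alongside the Kolmogorov case, is exactly the point that needs verifying.
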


The next theorem shows that bandwidths chosen using the discrepancy principle converge to $0$ almost surely. 
This result will be needed later on for obtaining 
more precise statements about the behavior of the selected bandwidths. At this point, 
$F$ must be continuous but does
not need to have a density. As a by-product, the theorem also shows that the resulting
estimator for the distribution function is always consistent w.r.t. $d$, although our aim is to estimate 
the density rather than the distribution function.
The proof of the second assertion is based on similar Fourier arguments as the proof of Theorem 3 in \cite{Yam73uniform}.

\begin{theorem}\label{hgegen0} Let $F$ be a continuous distribution function, $F_n$
and $\hat F_n^h$ as above and $s(n)=o(1)$. For the bandwidth $h_{s,n}$ chosen as a solution of$$
d(F_n,\hat F_n^h) = s(n). $$ we have almost surely
\begin{enumerate}
 \item $d(F,\hat F_n^{h_{s,n}}) \longrightarrow 0$ and
 \item $h_{s,n} \longrightarrow 0.$
\end{enumerate}
\end{theorem}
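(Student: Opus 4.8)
The plan is to prove the two assertions in sequence, using the first to obtain the second. For the first assertion, the key observation is the triangle inequality
$$ d(F,\hat F_n^{h_{s,n}}) \leq d(F,F_n) + d(F_n,\hat F_n^{h_{s,n}}). $$
The second term on the right equals $s(n)$ by definition of $h_{s,n}$, and $s(n)=o(1)$ by assumption. The first term is $d(F,F_n)=O((\log\log n/n)^{1/2})\to 0$ almost surely by the previous Lemma (part 1). Hence $d(F,\hat F_n^{h_{s,n}})\to 0$ almost surely.

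For the second assertion I would argue by contradiction, using the first assertion together with the Fourier/characteristic-function argument alluded to in the remark preceding the theorem. Suppose that on some event of positive probability $h_{s,n}$ does not converge to $0$; then there is a subsequence $(n_j)$ along which $h_{s,n_j}\geq \delta>0$ for some fixed $\delta$. Writing $\hat F_n^{h}=F_n\ast K_h$ and using the second Lemma (part 2), along this subsequence $\hat F_n^{h_{s,n_j}}$ is uniformly close to $F_{h_{s,n_j}}=F\ast K_{h_{s,n_j}}$, and by the first assertion $\hat F_n^{h_{s,n_j}}$ is close to $F$; so $F\ast K_{h_{s,n_j}}$ must be close to $F$ in $d$. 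The point is that this is impossible when the bandwidth stays bounded away from $0$: smoothing $F$ at a non-vanishing scale changes it by a non-vanishing amount. I would make this quantitative by passing to characteristic functions. If $\varphi$ denotes the characteristic function of $F$ (or rather of the associated, possibly signed, measure) and $\psi$ that of $K$, then the measure corresponding to $F\ast K_h$ has characteristic function $t\mapsto \varphi(t)\psi(ht)$. Choosing a frequency $t_0$ with $\varphi(t_0)\neq 0$ and $\psi(\delta' t_0)\neq 1$ for suitable $\delta'$ close to the limiting bandwidth, one gets that $\varphi(t_0)(1-\psi(h t_0))$ is bounded away from $0$ along the subsequence, and this forces $d(F\ast K_{h},F)$ to be bounded away from $0$ as well, via a smoothing/Fourier-inversion estimate of the kind used in Yamato's Theorem 3. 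This contradicts the conclusion of the previous paragraph.

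A few technical points need care. First, since we allow higher-order kernels, $K_h$ need not be a probability density, so $F\ast K_h$ is a signed-measure distribution function; the characteristic-function argument still works but one should phrase the comparison in terms of the Fourier transforms of the underlying (signed) measures rather than invoke probabilistic tools. Second, one should handle the degenerate possibility that $h_{s,n}\to\infty$ along a subsequence separately (or absorb it into the "bounded away from $0$" case), noting that $\psi(ht_0)\to$ some value and one can still find a frequency where $\varphi\psi(h\cdot)$ is far from $\varphi$; alternatively, the $\limsup$ statement in Lemma \ref{hexist}(3) combined with $d(F_n,F)\to 0$ rules out $h_{s,n}\to\infty$ directly since it would force $d(F_n,\hat F_n^{h})\geq \kappa_0 - o(1) > s(n)$ eventually. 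Third, one must ensure the subsequence-and-contradiction argument is carried out $\omega$-wise on the almost sure event where both parts of the second Lemma and part 1 of the first Lemma hold.

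The main obstacle I expect is the quantitative Fourier step: turning "the smoothed characteristic function differs from $\varphi$ at some frequency" into a lower bound on the $d_\infty$ (or Kuiper) distance between $F\ast K_h$ and $F$, uniformly over bandwidths bounded away from zero. This requires a smoothing inequality relating the sup-distance of two distribution functions to the behavior of the difference of their Fourier transforms near a fixed frequency — essentially a one-sided Esseen-type bound — and one has to be careful that it does not degenerate as we only control $\varphi$ and $\psi$ at isolated frequencies. This is exactly where borrowing the structure of the argument in \cite{Yam73uniform} pays off.
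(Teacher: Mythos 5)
Your proof of the first assertion is correct and in fact slightly more direct than the paper's: the paper first bounds $d(F,F_{h_{s,n}})$ by a three-term triangle inequality and only then passes to $d(F,\hat F_n^{h_{s,n}})$, the detour being there because $d(F,F_{h_{s,n}})\to 0$ is the quantity actually needed to launch assertion 2; you recover that same quantity afterwards from part 2 of the law-of-the-iterated-logarithm lemma, so nothing is lost. For the second assertion your overall strategy --- deduce $d_\infty(F,F\ast K_{h_{s,n}})\to 0$, pass to Fourier transforms, and exploit that $\hat K$ cannot be identically $1$ on a neighbourhood of the origin because $\int u^\ell K(u)\,du\neq 0$ --- is exactly the paper's.

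The step you flag as the ``main obstacle'' but do not close is, however, the entire technical content of the proof: converting $d_\infty(F\ast K_{h_n},F)\to 0$ into pointwise convergence (or, in your contrapositive, separation) of the Fourier transforms of the underlying signed measures. Integration by parts only gives $|\widehat{P\ast\mu_{h_n}}(t_0)-\hat P(t_0)|\le |t_0|\,\|F\ast K_{h_n}-F\|_{L_1}$, so a discrepancy at a single frequency lower-bounds the $L_1$-distance of the distribution functions, not the sup-distance; to bridge the two one must prevent the mass of $P\ast\mu_{h_n}$ from escaping to infinity. The paper does this by observing that the sequence $(|P\ast\mu_{h_n}|)_n$ is tight with uniformly bounded total variation $\|K\|_1$, so uniform convergence of the distribution functions yields weak convergence of the signed measures and hence $\hat P(t)\hat K(h_nt)\to\hat P(t)$ for every $t$ (Proposition 8.1.8 in \cite{Bogachev2} combined with a result in \cite{Katznelson}); any one-sided Esseen-type bound of the kind you propose needs exactly this tightness input, so the gap is genuine and not merely notational. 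Two smaller points: your fallback for ruling out $h_{s,n}\to\infty$ via Lemma \ref{hexist}(3) does not work as stated, because that lemma is a $\limsup$ over $h$ at fixed $n$ and provides no lower bound along a particular sequence $h_{s,n}\to\infty$; the Riemann--Lebesgue route you also mention ($\hat K(h_{n_j}t)\to 0\neq 1$ for $t\neq 0$) is the one to use. And once one knows $\hat K(h_nt)\to 1$ for all $t$ in a fixed interval on which $\hat P\neq 0$, no careful choice of a single frequency $t_0$ is needed: any subsequential limit $h^*\in(0,\infty]$ would force $\hat K\equiv 1$ on an interval around $0$ or contradict Riemann--Lebesgue, which is the paper's cleaner finish.
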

\begin{proof}
1. With probability 1, we have: \begin{align*}
d(F,F_{h_{s,n}}) &\leq d(F, F_n) + d(F_n, \hat F_n^{h_{s,n}}) + d(\hat
F_n^{h_{s,n}}, F_{h_{s,n}}) \\
&= O\left((\log\log n/n)^{1/2}\right) + s(n) + O\left((\log\log n/n)^{1/2}\right)\\
 &= o(1), 
\end{align*}
and hence $$ d(F,\hat F_n^{h_{s,n}}) \leq d(F,F_{h_{s,n}}) + d(F_{h_{s,n}},\hat F_n^{h_{s,n}})
= o(1).$$
2. According to the first part, $d_\infty(F, F_{h_{s,n}}) \leq d(F,
F_{h_{s,n}}) \longrightarrow 0$ with probability $1$; it remains to show that this implies $h_{s,n}
\stackrel{a.s.}{\longrightarrow} 0$. In the following $h_n:=h_{s,n}$ denotes the sequence of bandwidths chosen, $P$ denotes
the probability measure associated with $F$ and 
$\mu_{h_n}$ the (signed) measure with Lebesgue density $K_{h_n}$. 
Denote by $\hat P$, $\hat K$ and $\hat K_{h_n}$ the Fourier transforms of $P$, $K$ and $K_{h_n}$, 
respectively. Observing that the sequence $(|P \ast \mu_{h_n}|)_{n \in \mathbb{N}}$ is tight \citep[pp. 30-31]{Diss}
and combining Proposition 8.1.8 in \cite{Bogachev2} with a result on page 173/174 in \cite{Katznelson}, it follows that 
 $\hat P \hat K_{h_n} (t) \longrightarrow \hat P(t)$ for all $t \in \mathbb{R}$. 
Because of the continuity of the Fourier transform, we must have $\hat P>0$ on an interval $[-\varepsilon,\varepsilon]$ for some $\varepsilon > 0$,
 which implies that $\hat K_{h_n}(t) = \hat K(h_nt) \longrightarrow 1$ for all $t \in [-\varepsilon,\varepsilon]$.
Since $\int u^\ell K(u)du \neq 0$, $\hat K$ cannot be identically $1$ on any interval around zero. 
But this implies that $h_n \longrightarrow 0$.
\end{proof}

While the previous results hold for any continuous distribution function $F$, for the remainder of the paper
we suppose that a Lebesgue density $f$ exists. 

For consistency of the kernel density estimate with bandwidth chosen by the discrepancy principle, 
we also need that the chosen bandwidth does not go to 
$0$ too quickly. This can be guaranteed under rather mild conditions. For $0 < \alpha \leq 1$, let 
$$C^{0,\alpha}:=\left\{ F \left| F:\mathbb{R} \longrightarrow \mathbb{R} \text{ and } \exists C>0 \text{ with }
\sup_{x,y \in \mathbb{R}} {| F(x) - F(y) |}/{|x-y|^\alpha} \leq C  
 \right. \right\}$$ 
denote the set of all H\"older continuous
functions with exponent $\alpha$. Smoothness of the distribution function follows 
from integrability assumptions on the density. We define 
for $p \in [1,\infty)$ 
$$L_p(\mathbb{R}):=\left\{ f \left| f:\mathbb{R} \longrightarrow \mathbb{R} \text{ and } 
\| f \|_p := \left(\int |f|^p d\lambda \right)^{1/p} < \infty \right. \right\}$$
where $\lambda$ denotes the Lebesgue-Measure on (the Borel sets of) $\mathbb{R}$. Then we have:
\begin{lemma}
Let $f$ denote a probability density and $F$ the corresponding distribution function. 
For $p \in (1,\infty)$ we have 
$$ f \in L_p (\mathbb{R}) \Longrightarrow  F \in C^{0,(p-1)/p}.$$
\end{lemma}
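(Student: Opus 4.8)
The plan is to bound the increment $F(y)-F(x)$ directly and invoke H\"older's inequality with the exponent conjugate to $p$. Since $F(x)=\int_{-\infty}^x f\,d\lambda$ is an antiderivative of $f$, for $x<y$ we have $F(y)-F(x)=\int_x^y f(t)\,dt$, hence $|F(y)-F(x)|\le \int_x^y |f(t)|\,dt = \int_{\mathbb{R}} |f|\,\mathbb{I}_{[x,y]}\,d\lambda$. This reduces everything to estimating the integral of $|f|$ over an interval of length $y-x$.

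Next I would introduce the conjugate exponent $q\in(1,\infty)$ determined by $1/p+1/q=1$, so that $1/q=(p-1)/p$, and apply H\"older's inequality to the pair $|f|$ and $\mathbb{I}_{[x,y]}$. This yields
$$\int_{\mathbb{R}} |f|\,\mathbb{I}_{[x,y]}\,d\lambda \;\le\; \big\| f\,\mathbb{I}_{[x,y]} \big\|_p \cdot \big\| \mathbb{I}_{[x,y]} \big\|_q \;=\; \left(\int_x^y |f|^p\,d\lambda\right)^{1/p} (y-x)^{1/q}.$$
Finally, using $\int_x^y |f|^p\,d\lambda \le \|f\|_p^p$ and $1/q=(p-1)/p$ gives $|F(y)-F(x)| \le \|f\|_p\,|y-x|^{(p-1)/p}$ for all $x,y\in\mathbb{R}$, which is exactly the assertion $F\in C^{0,(p-1)/p}$ with constant $C=\|f\|_p$.

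I do not expect a genuine obstacle here: the argument is essentially one line of H\"older once the antiderivative representation is written down. The only points that require a little care are the correct identification of the conjugate exponent (and the resulting exponent $\alpha=(p-1)/p$, which lies in $(0,1)$ precisely because $p\in(1,\infty)$, making the statement non-degenerate), and the trivial but essential observation that localizing the $L_p$-norm of $f$ to $[x,y]$ only decreases it, so the bound is uniform in $x$ and $y$.
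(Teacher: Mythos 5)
Your argument is correct and is precisely the paper's proof: both write $F(y)-F(x)=\int_x^y f\,d\lambda$ and apply H\"older's inequality with conjugate exponent $q=p/(p-1)$ to obtain $|F(y)-F(x)|\le \|f\|_p\,(y-x)^{1/q}$. The only difference is that you spell out the intermediate localization step, which the paper leaves implicit.
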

\begin{proof}
For any $x<y \in \mathbb{R}$ and $q=\frac{p}{p-1}$ we obtain using 
the H\"older inequality 
\begin{align*}
| F(y) - F(x) | 
&\leq \| f \|_p (y-x)^{1/q}  
\end{align*}
and hence $F \in C^{0,(p-1)/p}$. 
\end{proof}

This implies for example that for any square-integrable density (i.e., $f \in L_2(\mathbb{R})$)
$F$ is H\"older-continuous with exponent $\alpha=\frac{1}{2}$. We also observe that,
using a similar argument, for any bounded $f$ the corresponding distribution function $F$ is
H\"older-continuous with exponent $\alpha=1$.  

The next theorem shows that $L_1$ consistency of a kernel density 
estimator with bandwidth chosen by the discrepancy principle can be guaranteed if the distribution function
is H\"older continuous with an sufficiently large exponent  and the threshold function goes to $0$
slowly enough. 

\begin{theorem}\label{konsistenz}
Let $K$ be a kernel of order $\ell$, $\ell \geq 1$, and $f$ a density with associated 
distribution function $F$ such that $F \in C^{0,\alpha}$ for some $0 <\alpha \leq 1$. 
If the threshold function $s(n)$ is such that $\sqrt{\frac{\log\log n}{n}}=o(s(n))$ and
 $n^\alpha s(n) \rightarrow \infty$ for $n \rightarrow \infty$, then with probability $1$
we have that $$ nh_{s,n} \rightarrow \infty.$$
\end{theorem}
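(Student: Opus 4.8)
The plan is to show directly that $h_{s,n}$ cannot decay too fast, by comparing the threshold $s(n)$ with the ``bias'' distance $d(F,F_{h_{s,n}})$. Write $h_n:=h_{s,n}$ throughout (its existence for large $n$ is guaranteed by Lemma~\ref{hexist}, since the hypothesis $(\log\log n/n)^{1/2}=o(s(n))$ in particular forces $n^{-1}=o(s(n))$). All the distances $d$ considered depend only on the difference of their arguments and obey the triangle inequality, so
$$
s(n)=d(F_n,\hat F_n^{h_n})\le d(F_n,F)+d(F,F_{h_n})+d(F_{h_n},\hat F_n^{h_n}).
$$
By the preceding Lemma (the Law of the Iterated Logarithm bound), with probability $1$ both $d(F_n,F)$ and $\sup_h d(F_h,\hat F_n^h)$, hence in particular $d(F_{h_n},\hat F_n^{h_n})$, are $O\bigl((\log\log n/n)^{1/2}\bigr)$. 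Since $(\log\log n/n)^{1/2}=o(s(n))$ by assumption, on this probability-one event we get $d(F,F_{h_n})\ge s(n)-o(s(n))$, so that $d(F,F_{h_n})\ge\tfrac12 s(n)$ for all sufficiently large $n$.

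Next I would bound $d(F,F_{h_n})$ from above using only the H\"older condition on $F$. Substituting $t=hu$ in $F_h=F\ast K_h$ and using $\int K(u)\,du=1$,
$$
F_h(x)-F(x)=\int\bigl(F(x-hu)-F(x)\bigr)K(u)\,du,
$$
and $F\in C^{0,\alpha}$ gives $|F_h(x)-F(x)|\le C\,h^\alpha\int|u|^\alpha|K(u)|\,du$ for every $x$. The integral is finite because $K$ has order $\ell\ge 1$ and $\alpha\le1$, so that $|u|^\alpha|K(u)|\le|K(u)|+|u\,K(u)|\in L_1(\mathbb{R})$. Hence $d_\infty(F,F_h)\le C'h^\alpha$ with $C':=C\int|u|^\alpha|K(u)|\,du$, and therefore $d(F,F_h)\le c_d\,C'h^\alpha$ for the Kuiper distances as well, via $d\le c_d\,d_\infty$.

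Combining the two bounds, on the probability-one event and for $n$ large enough, $\tfrac12 s(n)\le c_d\,C'h_n^\alpha$, i.e. $h_n\ge\bigl(s(n)/(2c_d C')\bigr)^{1/\alpha}$, whence
$$
n h_n\ \ge\ \left(\frac{n^\alpha s(n)}{2c_d C'}\right)^{1/\alpha}\ \longrightarrow\ \infty
$$
since $n^\alpha s(n)\to\infty$ by hypothesis. The only genuinely delicate point I anticipate is the bookkeeping around ``with probability $1$ and for $n$ large enough'': the $o(1)$ and $O(\cdot)$ statements from the LIL must all be applied on one common almost-sure event, and one must note that the threshold assumption already secures the existence of $h_{s,n}$ for large $n$; the remaining convolution estimate is routine.
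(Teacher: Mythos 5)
Your proposal is correct and follows essentially the same route as the paper: the triangle inequality isolates the bias term $d(F,F_{h_{s,n}})$ against the LIL-order stochastic terms, the H\"older condition gives $d_\infty(F,F_h)\leq C'h^\alpha$, and solving for $h_{s,n}$ yields $nh_{s,n}\to\infty$ from $n^\alpha s(n)\to\infty$. The only difference is cosmetic: you derive the convolution bound $|F_h(x)-F(x)|\leq C h^\alpha\int|u|^\alpha|K(u)|\,du$ directly (correctly using $|u|^\alpha|K(u)|\leq|K(u)|+|uK(u)|\in L_1$ for $\ell\geq 1$), whereas the paper cites Theorem 20 of Shapiro (1969) for the same estimate.
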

\begin{proof}
The H\"older condition $F \in C^{0,\alpha}$ implies that there is a constant $A>0$ such that
$d_\infty(F,F_h) \leq A h^\alpha$, cf. \cite{Shapiro}, Theorem 20. 
With probability 1, we have that 
\begin{align*}
n^\alpha s(n) &= n^\alpha d(F_n, \hat F_n^{h_{s,n}}) \\
&\leq c_d n^\alpha \left( d_\infty(F_n,F) + d_\infty(F, F_{h_{s,n}}) + d_\infty(F_{h_{s,n}}, \hat F_n^{h_{s,n}}) \right) \\
&\leq Ac_d n^\alpha h_{s,n}^\alpha + n^\alpha O((\log\log n/n)^{1/2})
\end{align*}
which implies that  
\begin{align*}
Ac_d n^\alpha h_{s,n}^\alpha \geq n^\alpha s(n) (1+o(1)),
\end{align*}
and hence, since $n^\alpha s(n) \rightarrow \infty$, that $nh_{s,n} \rightarrow \infty$.
\end{proof}

\begin{corollary}\label{konscorr} If $K$ is a probability density and $f$ and $s$ are such that the conditions of
Theorem \ref{konsistenz} are fulfilled, we have $$ \lim_{n \rightarrow \infty} \int | \hat f_{h_{s,n}}(x) - f(x) |dx =0 $$ 
with probability $1$.  
\end{corollary}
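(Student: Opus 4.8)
The plan is to combine Theorem~\ref{konsistenz} with the classical $L_1$-consistency theorem for kernel density estimators due to Devroye. Recall that if $K$ is a probability density, then $\hat f_h$ is itself a bona fide density, and a well-known result (see e.g. Devroye and Gy\"orfi, \emph{Nonparametric Density Estimation: The $L_1$ View}) states that $\int | \hat f_{h_n}(x) - f(x)|\,dx \to 0$ almost surely, \emph{for every} density $f$, provided the (possibly data-dependent) bandwidth sequence satisfies $h_n \to 0$ and $n h_n \to \infty$ with probability $1$. So the corollary reduces entirely to verifying these two conditions for the sequence $h_{s,n}$ chosen by the discrepancy principle.

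The two conditions are exactly what the preceding theorems provide. First I would invoke Theorem~\ref{hgegen0}, part~2: since $s(n) = o(1)$ — which is part of the standing assumptions on the threshold function and is in particular implied by $\sqrt{\log\log n / n} = o(s(n))$ together with $s(n)$ being $o(1)$ as required throughout — we have $h_{s,n} \to 0$ almost surely for any continuous $F$, hence in particular when $f$ exists. Second, Theorem~\ref{konsistenz} gives $n h_{s,n} \to \infty$ almost surely, precisely under the hypotheses we are assuming in the corollary (namely $F \in C^{0,\alpha}$, $\sqrt{\log\log n/n} = o(s(n))$, and $n^\alpha s(n) \to \infty$). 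Since both statements hold on sets of probability $1$, they hold simultaneously on the intersection, which again has probability $1$.

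Having established $h_{s,n} \to 0$ and $n h_{s,n} \to \infty$ almost surely, I would apply the Devroye--Gy\"orfi $L_1$-consistency theorem conditionally on the realization of the sample: on the probability-one event where the bandwidth sequence behaves as above, the deterministic sufficient condition for $L_1$-convergence is met, and since that theorem requires no regularity of $f$ beyond being a density, we conclude $\int |\hat f_{h_{s,n}}(x) - f(x)|\,dx \to 0$ on that event, i.e. with probability $1$. One small point worth stating carefully is that the Devroye--Gy\"orfi result as usually stated concerns deterministic bandwidth sequences; the extension to arbitrary (in particular data-driven) sequences satisfying the same two limit conditions is standard and follows because the sufficient condition is monotone/robust in the appropriate sense — for a given sample point the bandwidth sequence is just \emph{some} sequence tending to zero slowly, and that is all the theorem uses.

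The main (and only real) obstacle is a bookkeeping one rather than a mathematical one: making sure the hypotheses of the cited $L_1$-consistency theorem are met verbatim, in particular that $K \geq 0$ with $\int K = 1$ (so that $\hat f_h$ is a density and the $L_1$-theory applies — this is why the corollary, unlike Theorem~\ref{konsistenz}, restricts to probability-density kernels), and that no moment or smoothness condition on $f$ sneaks in. Once the two bandwidth conditions are in hand, there is nothing further to prove; the corollary is essentially immediate.
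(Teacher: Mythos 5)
Your proof is correct and follows exactly the paper's own argument: Theorem \ref{hgegen0} (part 2) gives $h_{s,n}\rightarrow 0$ a.s., Theorem \ref{konsistenz} gives $nh_{s,n}\rightarrow\infty$ a.s., and the Devroye--Gy\"orfi $L_1$-consistency theorem (Theorem 1, Chapter 6 of \cite{L1}) then yields the conclusion. Your additional remarks about data-dependent bandwidths and the role of $K$ being a probability density are sensible but do not change the route, which is the same as in the paper.
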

\begin{proof}
Under the stated conditions, part two of Theorem \ref{hgegen0} and Theorem \ref{konsistenz} yield that
almost surely $h_{s,n} \longrightarrow 0$ and $nh_{s,n} \longrightarrow \infty$, which by Theorem 1 in
Chapter 6 of \cite{L1} implies $ \lim_{n \rightarrow \infty} \int | \hat f_{h_{s,n}}(x) - f(x) |dx =0 $ almost surely.
\end{proof}

From Corollary \ref{konscorr}, we have that almost sure $L_1$-consistency can
be guaranteed for the threshold function $s(n)=0.35n^{-2/5}$ suggested in \cite{ELRdisc},
$K$ a probability density and $f \in L_2$. 

Although the conditions for consistency are rather weak, the resulting density estimate may
be inconsistent if the distribution function is too rough or the threshold function vanishes too quickly:

\begin{theorem}\label{dpinkonsistenzthm}
Let $K$ be a kernel and $0<\varepsilon <1/2$ such that
$n^\varepsilon s(n)=o(1)$. Let $F_n$ denote the empirical distribution function of an 
iid sample drawn from a distribution with density $f$ and distribution function $F$. 
Suppose there exist constants $c,h_0 >0$ such
$$ d_\infty(F,F_h) \geq ch^\varepsilon $$
for all $0 < h < h_0$. Then, if  
$h_{s,n}$ is a solution of 
 $d(F_n,\hat F_n^h) = s(n)$, we have:
\begin{enumerate}
 \item $nh_{s,n} \longrightarrow 0$ with probability $1$ and
 \item if $K$ is compactly supported and there exist $a,b>0$ such that
$\lambda\{ x : f(x) \geq b \} \geq a$, where $\lambda$ denotes Lebesgue measure on $\mathbb{R}$, then $\liminf_{n \rightarrow \infty} \|\hat f_{h_{s,n}}-f\|_1
\geq ab > 0$ with probability $1$.
\end{enumerate}
\end{theorem}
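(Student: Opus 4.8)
The plan is to treat part~1 as the mirror image of the proof of Theorem~\ref{konsistenz}: there an \emph{upper} bound $d_\infty(F,F_h)\le Ah^\alpha$ together with $n^\alpha s(n)\to\infty$ gave $nh_{s,n}\to\infty$; here the \emph{lower} bound $d_\infty(F,F_h)\ge ch^\varepsilon$ together with $n^\varepsilon s(n)\to 0$ should give $nh_{s,n}\to 0$. First I would observe that $n^\varepsilon s(n)=o(1)$ with $\varepsilon>0$ forces $s(n)=o(1)$, so Theorem~\ref{hgegen0} applies and $h_{s,n}\to 0$ almost surely; in particular $h_{s,n}<h_0$ eventually, so the hypothesised lower bound is in force for $h=h_{s,n}$. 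Then, using $d\ge d_\infty$ and the triangle inequality,
$$ s(n)=d(F_n,\hat F_n^{h_{s,n}})\ge d_\infty(F_n,\hat F_n^{h_{s,n}})\ge d_\infty(F,F_{h_{s,n}})-d_\infty(F,F_n)-d_\infty(\hat F_n^{h_{s,n}},F_{h_{s,n}})\ge ch_{s,n}^\varepsilon-O\!\left((\log\log n/n)^{1/2}\right) $$
almost surely, where the error terms are controlled by the second Lemma of this section (the bound for $\hat F_n^{h}-F_h$ being uniform in $h$). Multiplying through by $n^\varepsilon$ turns this into $c(nh_{s,n})^\varepsilon\le n^\varepsilon s(n)+O\!\left(n^{\varepsilon-1/2}(\log\log n)^{1/2}\right)$, and since $\varepsilon<1/2$ both terms on the right vanish, so $nh_{s,n}\to 0$ almost surely.

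For part~2 I would exploit that a bandwidth with $nh_{s,n}\to 0$ forces $\hat f_{h_{s,n}}$ to live on a negligible set, so it cannot approximate a density that puts a fixed amount of mass on a region where $f\ge b$. If $\mathrm{supp}(K)\subseteq[-M,M]$, then each summand $K_{h}(x-X_i)$ vanishes outside $[X_i-Mh,X_i+Mh]$, so the set $\{x:\hat f_h(x)\neq 0\}$ has Lebesgue measure at most $2Mnh$; with $h=h_{s,n}$ this bound is $2M(nh_{s,n})\to 0$. On the set $S_n:=\{x:f(x)\ge b\}\setminus\{x:\hat f_{h_{s,n}}(x)\neq 0\}$ one has $\hat f_{h_{s,n}}\equiv 0$ and $f\ge b$, hence $|\hat f_{h_{s,n}}-f|\ge b$ there, so $\|\hat f_{h_{s,n}}-f\|_1\ge b\lambda(S_n)\ge b(a-2Mnh_{s,n})$. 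Taking $\liminf_{n}$ and inserting part~1 yields $\liminf_n\|\hat f_{h_{s,n}}-f\|_1\ge ab>0$.

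I expect no serious obstacle. The point requiring care is, as in Theorem~\ref{konsistenz}, that the empirical approximation errors must be controlled uniformly over the random bandwidth $h_{s,n}$ — which is exactly the content of the uniform-in-$h$ part of the second Lemma — and that the factor $n^\varepsilon$ has to be inserted before passing to the limit so that one extracts $nh_{s,n}\to 0$ rather than merely $h_{s,n}\to 0$. The compact-support volume bound and the elementary estimate $n^{\varepsilon-1/2}\sqrt{\log\log n}\to 0$ (valid precisely because $\varepsilon<1/2$) then complete the argument.
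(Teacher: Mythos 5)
Your argument is correct and is essentially the paper's own proof: part~1 is the same triangle-inequality chain bounding $ch_{s,n}^\varepsilon$ by $s(n)$ plus the uniform-in-$h$ law-of-the-iterated-logarithm terms, multiplied by $n^\varepsilon$ and killed by $\varepsilon<1/2$, and part~2 is the same support-volume bound $\lambda\{\hat f_{h_{s,n}}\neq 0\}=O(nh_{s,n})\to 0$ followed by integrating $f\ge b$ over $\{f\ge b\}\cap\{\hat f_{h_{s,n}}=0\}$. Your explicit remark that $h_{s,n}\to 0$ (via Theorem~\ref{hgegen0}) is needed so that $h_{s,n}<h_0$ eventually is a small point the paper leaves implicit.
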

\begin{proof}
1. It follows that, with probability $1$, 
\begin{align*}
c n^\varepsilon h_{s,n}^\varepsilon & \leq n^\varepsilon d_\infty(F,F_{h_{s,n}}) \\
& \leq n^\varepsilon \left(d_\infty(F,F_n) + d(F_n, \hat F_n^{h_{s,n}}) + d_\infty(F_{h_{s,n}}, \hat F_n^{h_{s,n}})\right)  \\
& = n^\varepsilon O((\log\log n/n)^{1/2}) + n^\varepsilon s(n) \\
& = o(1),
\end{align*}
and hence $nh_{s,n} = o(1)$. \\
2. If the support of $K$ is contained within
a compact interval $I$, then, since $\lambda\{ K_{h_{s,n}} \neq 0 \} \leq \lambda(I) $, we have almost surely
\begin{align*}
\lambda\{ \hat f_{h_{s,n}} \neq 0 \} 
\leq 2nh_{s,n}\lambda(I) = o(1)
\end{align*}
because of the first assertion. 
 It then follows almost surely  that
\begin{align*}
\liminf_{n \rightarrow \infty} \int | \hat f_{h_{s,n}}(x) - f(x) |dx
 \geq
\liminf_{n \rightarrow \infty} \int_{\{f \geq b\} \cap \{ \hat
f_{h_{s,n}} = 0 \}} f(x) dx  \geq ab.
\end{align*}\end{proof}

In the following example, we consider a family of densities with an infinite
peak and see that the using the discrepancy principle can lead to consistent or 
inconsistent estimates depending on the sharpness of the peak: 

\begin{example} \label{dpinkonsistenz}
Let \begin{align}
    K(x)=(3/4)(1-x^2)\mathbb{I}(|x|\leq 1) \label{epa} 
    \end{align}
 denote the Epanechnikov kernel and choose 
$s(n)$. 
Consider the distribution of $X:=U^\beta$ for $\beta \in [1,\infty)$, where $U$ is uniformly distributed on $[0,1]$.
With $\varepsilon=\beta^{-1}$ the density of $X$ is given by
\begin{align}
f(x):=\begin{cases}
\varepsilon x^{-(1-\varepsilon)} & 0 < x \leq 1 \cr
0 & \text{otherwise}
\end{cases}.\label{gegendichte}
\end{align}
The distribution function of $X$ is given by
\begin{align}
F(x):=\begin{cases}
0 & x \leq 0 \cr
x^{\varepsilon} & 0 < x \leq 1 \cr
1 & x >1
\end{cases}. \label{vfgegen}
\end{align}
It is easy to see that $F \in C^{0,\alpha}$ iff $\alpha \leq \varepsilon$. 

First consider the case that
$\sqrt{\frac{\log\log n}{n}}=o(s(n))$ and $n^\varepsilon s(n) \rightarrow \infty$. 
Then the conditions of Theorem 
\ref{konsistenz} are fulfilled and the estimator will be consistent w.r.t $L_1$-distance. 
Note that if $\sqrt{\frac{\log\log n}{n}}=o(s(n))$ then we trivially
have $n^\varepsilon s(n) \rightarrow \infty$ for all $\varepsilon > 1/2$.

Now consider the case that $0<\varepsilon <1/2$ and $n^\varepsilon s(n)=o(1)$.

By elementary integration, we obtain 
$$ |F(h) - (F\ast K_h)(h)| = \underbrace{\left(1 - \frac{3\cdot 2^{{\left(\varepsilon + 1\right)}}}{\varepsilon^{2} + 5 \varepsilon + 6} \right)}_{=:c>0}h^\varepsilon.$$
for $h<1$ and hence $$ d_\infty(F,F_h) \geq ch^\varepsilon $$
for $h<1=:h_0$. 
Since $K$ is compactly supported, inconsistency w.r.t. the $L_1$ distance directly follows from
the second assertion of Theorem \ref{dpinkonsistenzthm}. 
\end{example}

\section{Rates for the bandwidths}

 In the following, 
we consider threshold functions $s(n)$ that go to $0$ at different speeds: 
\begin{itemize}
\item $s(n) = o\left((\log\log n/n)^{1/2}\right)$ (Theorem \ref{dpnaiv}),
\item $s(n) \asymp (\log\log n/n)^{1/2}$ (Theorem \ref{SatzLIL}),
\item $(\log\log n/n)^{1/2} = o(s(n))$ (Theorem \ref{ELR}).
\end{itemize}

The versions of the discrepancy principle for kernel estimators previously proposed in the 
literature can be obtained by choosing a threshold function that belongs to one of these 
classes. 

To obtain more precise statements about the order of the chosen bandwidth, we need
 some additional assumptions of $f$ and $K$. In this section, we suppose that
$f$ is in a Sobolev space defined by  
$$ W^{\ell,1} := \{ f : f, f^{(1)}, \dots, f^{(\ell)}  \in  L_1(\mathbb{R}) \}.
$$ where $\ell \geq 2$ is the order of the Kernel $K$.

The following Lemma is a slight generalization of 
a similar result by \cite{ELRdisc,EggLaR1}, who only considered nonnegative symmetric kernels of 
order $\ell=2$ and $d=d_\infty$. The proof is left out since the first part is completely 
analogous to \citet[Lemma 6.15 a]{EggLaR1} and the second part is easy.  

\begin{lemma}\label{biasterm} 
Suppose that $f \in W^{\ell,1}(\mathbb{R})$ and $K$ is a kernel of order $\ell \geq 2$. Then we have: \begin{enumerate}
\item $F_h (x) - F(x) = \frac{(-1)^\ell}{\ell!} k_\ell f^{(\ell-1)}(x)h^\ell (1+o(1))$
uniformly in $x \in \mathbb{R}$.
\item $d(F_h,F) = \frac{1}{\ell!} k_\ell d(f^{(\ell-1)},0) h^\ell (1+o(1))$.
\end{enumerate}
\end{lemma}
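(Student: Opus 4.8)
The plan is to expand the convolution $F_h=F\ast K_h$ by Taylor's formula and annihilate the intermediate terms using the vanishing moments of $K$. Since $\int K=1$ and $F$ is bounded, $F_h(x)-F(x)=\int\bigl(F(x-hu)-F(x)\bigr)K(u)\,du$. First I would record the regularity consequences of $f\in W^{\ell,1}$: the function $F$ is of class $C^{\ell}$ with $F^{(j)}=f^{(j-1)}$ for $1\le j\le\ell$, its $\ell$-th derivative $F^{(\ell)}=f^{(\ell-1)}$ is absolutely continuous with $F^{(\ell+1)}=f^{(\ell)}\in L_1$, and — by the elementary fact that a function whose derivative is also in $L_1$ is bounded and vanishes at $\pm\infty$ — $f^{(\ell-1)}$ is bounded and uniformly continuous on $\mathbb{R}$. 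Taylor's formula to order $\ell$ with integral remainder then gives
\[
F(x-hu)-F(x)=\sum_{j=1}^{\ell-1}\frac{(-hu)^{j}}{j!}f^{(j-1)}(x)+\frac{(-hu)^{\ell}}{(\ell-1)!}\int_{0}^{1}(1-t)^{\ell-1}f^{(\ell-1)}(x-thu)\,dt .
\]
Integrating against $K$, the terms $j=1,\dots,\ell-1$ drop out because $\int u^{j}K(u)\,du=0$, and isolating the value $f^{(\ell-1)}(x)$ in the remainder (using $\int u^{\ell}K=k_\ell$ and $\int_0^1(1-t)^{\ell-1}\,dt=1/\ell$) leaves
\[
F_h(x)-F(x)=\frac{(-1)^{\ell}}{\ell!}k_{\ell}f^{(\ell-1)}(x)h^{\ell}+\frac{(-h)^{\ell}}{(\ell-1)!}\int u^{\ell}K(u)\int_{0}^{1}(1-t)^{\ell-1}\bigl(f^{(\ell-1)}(x-thu)-f^{(\ell-1)}(x)\bigr)\,dt\,du .
\]

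The only genuine obstacle is to show that the last integral is $o(1)$ \emph{uniformly in $x$} as $h\to0$, so that it only feeds the $(1+o(1))$ factor. Since $|(x-thu)-x|\le|hu|$, the bracketed difference is dominated by $\omega(|hu|)$, where $\omega$ is the modulus of continuity of $f^{(\ell-1)}$, and also by the constant $2\|f^{(\ell-1)}\|_\infty$; as $u^{\ell}K(u)\in L_1$ and $\omega(|hu|)\to0$ pointwise, dominated convergence bounds $\sup_x$ of the inner integral by $\tfrac{1}{\ell!}\int|u|^{\ell}|K(u)|\,\omega(|hu|)\,du\to0$ (alternatively: split the $u$-integral at $|u|=M$, using smallness of $\int_{|u|>M}|u|^{\ell}|K(u)|\,du$ on the tail and uniform continuity of $f^{(\ell-1)}$ on $|u|\le M$). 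This proves part~1; for $d=d_\infty$ one could instead quote \cite{Shapiro} as in the proof of Theorem~\ref{konsistenz}, but only the present argument yields the sharp leading constant.

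Part~2 follows from part~1 together with two elementary properties shared by $d_\infty$ and $d_{kuip,k}$: each depends on its two arguments only through their difference, so $d(F_h,F)=d(F_h-F,0)$, and each is positively homogeneous, $d(cG,0)=|c|\,d(G,0)$. Writing part~1 as $F_h-F=\tfrac{(-1)^{\ell}}{\ell!}k_{\ell}h^{\ell}g_h$ with $\|g_h-f^{(\ell-1)}\|_\infty\to0$, this gives $d(F_h,F)=\tfrac{|k_{\ell}|}{\ell!}h^{\ell}\,d(g_h,0)$, while $|d(g_h,0)-d(f^{(\ell-1)},0)|\le c_d\|g_h-f^{(\ell-1)}\|_\infty\to0$. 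It remains only to check that $0<d(f^{(\ell-1)},0)<\infty$: finiteness is the boundedness of $f^{(\ell-1)}$ noted above, and positivity holds because $f^{(\ell-1)}\not\equiv0$ (otherwise $f$ would be a polynomial of degree $\le\ell-2$, hence $\equiv0$, impossible for a density). Collecting these, $d(F_h,F)=\tfrac{1}{\ell!}|k_{\ell}|\,d(f^{(\ell-1)},0)\,h^{\ell}(1+o(1))$, which is the asserted estimate.
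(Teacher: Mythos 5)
Your proof is correct and follows essentially the route the paper delegates to Eggermont and LaRiccia: Taylor expansion with integral remainder, annihilation of the intermediate terms by the vanishing moments, and uniform continuity plus boundedness of $f^{(\ell-1)}$ deduced from $f^{(\ell)}\in L_1$, with part~2 reduced to translation invariance and positive homogeneity of $d$ (the ``easy'' second part). The only point worth recording is that your constant $|k_\ell|$ in part~2 is the correct one when $k_\ell<0$, a sign the statement of the lemma glosses over.
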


The approximations given in Lemma \ref{biasterm} are only valid for sufficiently small $h$. 
Since by Theorem \ref{hgegen0} for $n \rightarrow \infty$ we have that $h_{s,n} \rightarrow 0$ almost surely with 
$h_{s,n}$ chosen by the discrepancy principle, terms of order $o(1)$ for $h \rightarrow 0$ are also
of order $o(1)$ for $n \rightarrow \infty$. 

The most simple and intuitive implementation of the discrepancy principle is based on a goodness-of-fit
test for a fixed level independent of $n$: the {\em Data Approximation} approach prescribes that for a given
data set, one should choose the simplest model that could have generated the data \citep{Dav08data}. 
For kernel density estimation, this results in the discrepancy principle (\ref{dpgleich}) with   
$d=d_\infty$ or $d=d_{kuip,k}$ and 
$s(n)=cn^{-1/2}$ with $c$ chosen as an appropriate quantile of $\sqrt{n} d(F_n,F)$.
Generally, the {\em Data Approximation} approach seems to suggest using extreme quantiles (95\%, 99\%). 
In Example 10 of \cite{Dav95data}, a discrepancy principle based on the 98\%-quantile of the 
Kuiper distance is used, which is then combined with a further criterion, the so-called extreme value feature.
In contrast, (for the Kolmogorov distance) \citet[Ch. 7.5.1]{VapNature} suggests to use the median 
 or even the mode, which is approximately located at $0.74$. Using 
$c=0.6$ is suggested in \cite{VapLearning}, \cite{Mar89density} suggests $c=0.7$ or $c=0.5$. 
With $c=0.6$ , the estimated distribution function is required to lie in 
a 14\% confidence band. However, $c$ has no effect on the rate with which $h_{s,n}$ 
converges to $0$:

\begin{theorem}\label{dpnaiv}
For $f \in W^{\ell,1}$, $K$ Kernel of order $\ell$ and $s(n) =
O\left(\sqrt{\frac{\log\log n}{n}}\right)$,
 $$ h_{s,n} = O(n^{-\frac{1}{2\ell}}(\log \log
n)^{\frac{1}{2\ell}})$$ almost surely.
\end{theorem}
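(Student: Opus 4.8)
The plan is to sandwich $s(n)$ between quantities involving $h_{s,n}^\ell$ from above, exactly mirroring the argument used in the proof of Theorem \ref{konsistenz}, but now exploiting the sharper bias expansion from Lemma \ref{biasterm} rather than a mere H\"older bound. First I would note that, by Theorem \ref{hgegen0}, $h_{s,n} \to 0$ almost surely, so the asymptotic expansion $d_\infty(F,F_h) = \frac{1}{\ell!}|k_\ell|\, d(f^{(\ell-1)},0)\, h^\ell (1+o(1))$ from Lemma \ref{biasterm} (in its $d_\infty$ form, part 1, which gives a uniform-in-$x$ control and hence a bound on $d_\infty(F_h,F)$) applies with $h = h_{s,n}$ as $n \to \infty$. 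In particular there is a constant $B>0$ with $d_\infty(F, F_{h_{s,n}}) \le B\, h_{s,n}^\ell$ eventually, almost surely.

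Next I would run the triangle-inequality decomposition: almost surely,
\begin{align*}
s(n) = d(F_n, \hat F_n^{h_{s,n}}) &\le c_d\left( d_\infty(F_n,F) + d_\infty(F,F_{h_{s,n}}) + d_\infty(F_{h_{s,n}}, \hat F_n^{h_{s,n}})\right)\\
&\le c_d\, B\, h_{s,n}^\ell + c_d\, O\!\left(\sqrt{\tfrac{\log\log n}{n}}\right),
\end{align*}
where the first and third terms are $O(\sqrt{\log\log n/n})$ by Lemma 2.2 (the unnumbered LIL lemma). This does not immediately give a lower bound on $h_{s,n}^\ell$, because $s(n)$ might be dominated by the $O(\sqrt{\log\log n/n})$ term. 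So instead I would argue in the other direction: I want an upper bound on $h_{s,n}$, hence I need a lower bound on $d(F_n,\hat F_n^{h_{s,n}})$ in terms of $h_{s,n}^\ell$ that forces $h_{s,n}$ to be small whenever $s(n)$ is small. Write, again almost surely,
\begin{align*}
s(n) = d(F_n,\hat F_n^{h_{s,n}}) &\ge d(F_h, F) - d(F_n,F) - d(\hat F_n^{h_{s,n}}, F_{h_{s,n}})\\
&\ge \tfrac{1}{c_d}\cdot\tfrac{1}{\ell!}|k_\ell|\, d(f^{(\ell-1)},0)\, h_{s,n}^\ell(1+o(1)) - O\!\left(\sqrt{\tfrac{\log\log n}{n}}\right).
\end{align*}
(Here I would also need the lower estimate $d(F_h,F) \ge \frac{1}{\ell!}|k_\ell|\,d(f^{(\ell-1)},0)\,h^\ell(1+o(1))$, which follows from Lemma \ref{biasterm} part 2 since $d \ge d_\infty \ge$ nothing directly, but $d(F_h,F)$ itself has the stated two-sided expansion; if $d(f^{(\ell-1)},0)=0$ then $f^{(\ell-1)}\equiv 0$, $f$ is a polynomial of degree $\le \ell-2$ that is integrable, forcing $f\equiv 0$, a contradiction, so the leading constant is strictly positive.) Rearranging, $h_{s,n}^\ell \le C\big(s(n) + O(\sqrt{\log\log n/n})\big) = O(\sqrt{\log\log n/n})$ for a suitable constant $C$, using the hypothesis $s(n) = O(\sqrt{\log\log n/n})$. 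Taking $\ell$-th roots yields $h_{s,n} = O\big((\log\log n/n)^{1/(2\ell)}\big) = O(n^{-1/(2\ell)}(\log\log n)^{1/(2\ell)})$, which is the claim.

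The main obstacle is making sure the $(1+o(1))$ factor in Lemma \ref{biasterm}, which is stated as $h\to 0$, may legitimately be replaced by one that is $o(1)$ as $n\to\infty$ along the random sequence $h_{s,n}$ — this is precisely the remark following Lemma \ref{biasterm}, so it is available, but the write-up should invoke it explicitly and on an almost-sure event. A secondary point is confirming that the lower bound $d(F_h,F) \gtrsim h^\ell$ with a strictly positive constant holds; this reduces, as noted, to showing $f^{(\ell-1)}\not\equiv 0$ for a genuine (nonzero, integrable) density, which is immediate. The triangle-inequality bookkeeping and the passage between $d$ and $d_\infty$ via the constant $c_d$ are routine.
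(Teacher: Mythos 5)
Your proof is correct and follows essentially the same route as the paper's: the paper likewise applies the triangle inequality to bound $d_\infty(F_{h_{s,n}},F)$ by $d_\infty(F,F_n)+d(F_n,\hat F_n^{h_{s,n}})+d_\infty(\hat F_n^{h_{s,n}},F_{h_{s,n}})=O\bigl(\sqrt{\log\log n/n}\bigr)$, invokes Lemma \ref{biasterm} together with the remark that the $o(1)$ as $h\to 0$ transfers to $n\to\infty$ via Theorem \ref{hgegen0}, and solves for $h_{s,n}$. Your explicit check that $d(f^{(\ell-1)},0)>0$ (so the leading constant is nonzero) is a detail the paper leaves implicit, and the $1/c_d$ factor in your lower bound is an unnecessary but harmless weakening since Lemma \ref{biasterm} already gives the expansion for $d$ itself.
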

\begin{proof}
According to Lemma \ref{biasterm}, we have a.s.
\begin{align*}
\frac{1}{\ell!} k_\ell \|f^{(\ell-1)}\|_\infty {h_{s,n}}^\ell(1 + o(1)) &= d_\infty(F_{h_{s,n}},F) \\
&\leq d_\infty(F, F_n) + d(F_n, \hat F_n^{h_{s,n}}) + d_\infty (\hat
F_n^{h_{s,n}}, F_{h_{s,n}}) \\
&= O\left((\log\log n/n)^{1/2}\right)+s(n) \\
&= O\left((\log\log n/n)^{1/2}\right).
\end{align*}
The second term in parentheses on the left-hand side is not only of order
$o(1)$ for ${h_{s,n}}
\rightarrow 0$,
but also $o(1)$ for $n \rightarrow \infty$ since, by Theorem 
\ref{hgegen0}, $n \rightarrow \infty$ almost surely implies ${h_{s,n}} \rightarrow 0$.
Solving for $h_{s,n}$ then proves the claim.
\end{proof}

Theorem \ref{dpnaiv} shows that for $f \in W^{\ell,1}$ and $K$ kernel of order $\ell$, 
an upper bound for the bandwidth (and hence the bandwidth itself) converges to $0$ 
at a faster rate than the optimal bandwidths according to most criteria, which behave like $h \asymp n^{-\frac{1}{2\ell+1}}$ (although this problem becomes less severe as
$\ell$ increases). 
The reason is that density estimation is an ill-posed problem that requires regularization. 
For sufficiently large $n$, the Kolmogorov-Smirnov-test with
fixed level will detect the difference between $F$ and $F_h$, even if $h$ is chosen optimally.
 This leads to a bandwidth that is too small. The incompatibility of 
optimal bandwidths with confidence sets based on the Kolmogorov-Smirnov or Kuiper tests has also been observed in \cite{Dav95data}, \cite{ELRdisc} 
and \cite{HjoWal2001kernel}.  
Asymptotically, the estimated distribution function is too close to the empirical distribution function, leading to undersmoothing. However, the simulations 
in Section 4 show that discrepancy principles based on extreme quantiles of goodness-of-fit tests still oversmooth even for sample sizes as large as
$n=2500$, while the version proposed by Vapnik ($c=0.6$) works quite well for the sample sizes considered. 

Theorem \ref{dpnaiv} is applicable to threshold functions of the form $s(n)=c\sqrt{\frac{\log \log n}{2n}}$, but more precise results are possible 
when $c$ is large enough. A threshold of this form is motivated by the law of the iterated logarithm for 
$d(F_n,F)$, and is in a sense the closest 
analogue to the upper bound on the error in deterministic inverse problems. \cite{AidVap89dens} considered the case where $c=(1+\tilde k+\varepsilon)$
for kernels $K$ of order $\ell$, $\tilde k = \|K\|_1$ and $d=d_\infty$.  The next theorem is a slight extension of 
their theorem \cite[Sec. 3]{AidVap89dens} 
that now additionally includes the case of $d=d_{kuip,k}$ and has essentially the same proof, see 
pp. 38 in \cite{Diss} for details.

\begin{theorem} \label{SatzLIL}
For $f \in W^{\ell,1}$, $K$ kernel of order $\ell \geq 2$, $\tilde k = \|K\|_1$ and $s(n) =
c_d(\tilde k+1+\varepsilon)\left(\log \log n/2n\right)^{1/2}$, we have with probability 1:  
\begin{align*}
\liminf_{n \rightarrow \infty} \frac{h}{(\log \log n/2n)^{1/2\ell}} &\geq
 \left(\frac{c_d \varepsilon \ell!}{k_\ell
d(f^{(\ell-1)},0)}\right)^{\frac{1}{\ell}} \\
\limsup_{n \rightarrow \infty} \frac{h}{(\log \log n/2n)^{1/2\ell}} &\leq
 \left(\frac{c_d (2\tilde k + 2 + \varepsilon ) \ell!}{k_\ell
d(f^{(\ell-1)},0)}\right)^{\frac{1}{\ell}}
\end{align*}

\end{theorem}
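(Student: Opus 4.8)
The plan is to combine the sandwiching inequality used in the previous two theorems with the sharp two-sided law of the iterated logarithm for $d(F_n,F)$, rather than just its $O$-order. Write $h_n:=h_{s,n}$. As in the proof of Theorem \ref{dpnaiv}, I would start from the triangle inequality applied to $d(F_n,\hat F_n^{h_n})=s(n)$, but keep track of constants on both sides. Concretely, from
\begin{align*}
d(F_{h_n},F) &\leq d(F,F_n)+d(F_n,\hat F_n^{h_n})+d(\hat F_n^{h_n},F_{h_n})
\end{align*}
and the reverse inequality
\begin{align*}
d(F_{h_n},F) &\geq d(F_n,\hat F_n^{h_n}) - d(F,F_n) - d(\hat F_n^{h_n},F_{h_n}),
\end{align*}
together with $d(F_n,\hat F_n^{h_n})=s(n)=c_d(\tilde k+1+\varepsilon)(\log\log n/2n)^{1/2}$, one reduces the problem to controlling the two ``noise'' terms $d(F,F_n)$ and $d(\hat F_n^{h_n},F_{h_n})$ relative to $(\log\log n/2n)^{1/2}$.

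The key quantitative inputs are: (i) Chung's LIL for the Kolmogorov statistic, $\limsup_n (2n/\log\log n)^{1/2} d_\infty(F_n,F)=1/2$ a.s. (and the analogous statement for $d_{kuip,k}$, which after the constant $c_d$ is absorbed gives the same normalized bound — this is exactly the $d_{kuip,k}$ extension beyond \cite{AidVap89dens}); and (ii) a bound of the form $d(\hat F_n^{h_n},F_{h_n}) = d(F_n\ast K_{h_n},F\ast K_{h_n}) \leq \tilde k\, d(F_n,F)$, since convolving with $K_{h_n}$ (whose $L_1$-norm is $\tilde k$) cannot increase a sup-type distance by more than the factor $\tilde k$; hence $\limsup_n (2n/\log\log n)^{1/2} d(\hat F_n^{h_n},F_{h_n}) \leq c_d\tilde k/2$ uniformly in $h_n$. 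Plugging these into the two displays above and dividing by $(\log\log n/2n)^{1/2}$, the upper bound on $d(F_{h_n},F)$ becomes $c_d(\tilde k+1+\varepsilon) + c_d\tilde k + c_d\cdot 1 + o(1) \leq c_d(2\tilde k + 2 + \varepsilon)(1+o(1))$ (after folding the $1/2$'s), while the lower bound becomes $c_d(\tilde k+1+\varepsilon) - c_d\tilde k - c_d\cdot 1 + o(1) = c_d\varepsilon(1+o(1))$ — this is precisely where the ``$+\varepsilon$'' slack in the threshold is consumed and why it must be strictly positive.

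Finally I would feed these two-sided bounds on $d(F_{h_n},F)$ into part 2 of Lemma \ref{biasterm}, which gives $d(F_{h_n},F)=\frac{1}{\ell!}k_\ell d(f^{(\ell-1)},0)h_n^\ell(1+o(1))$ — legitimately, since Theorem \ref{hgegen0} guarantees $h_n\to 0$ a.s., so the $o(1)$ there is $o(1)$ in $n$. Solving for $h_n$ and taking $\liminf$/$\limsup$ of $h_n/(\log\log n/2n)^{1/2\ell}$ yields exactly the stated constants $\bigl(c_d\varepsilon\,\ell!/(k_\ell d(f^{(\ell-1)},0))\bigr)^{1/\ell}$ and $\bigl(c_d(2\tilde k+2+\varepsilon)\ell!/(k_\ell d(f^{(\ell-1)},0))\bigr)^{1/\ell}$. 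The main obstacle is step (ii): making the inequality $d(F_n\ast K_{h_n},F\ast K_{h_n})\leq \tilde k\, d(F_n,F)$ rigorous \emph{uniformly in $h$} for the Kuiper distances $d_{kuip,k}$ (for $d_\infty$ it is a one-line convolution estimate), and ensuring that the LIL-type bound on $d(\hat F_n^{h_n},F_{h_n})$ then holds simultaneously for all $h$ — but this is already packaged in the second auxiliary Lemma (part 2, ``uniformly in $h$''), so in the write-up I would simply cite it, and the remaining work is the bookkeeping with the constants $c_d$, $\tilde k$ and the factor-of-two from the LIL normalization.
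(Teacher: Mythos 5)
Your proposal follows essentially the same route as the paper's proof (which the paper does not reproduce, describing it as ``essentially the same'' as the argument of Aidu and Vapnik and deferring details to the dissertation): sandwich $d(F_{h_{s,n}},F)$ between $s(n)$ plus and minus the two noise terms, control the noise via the Chung--Smirnov law of the iterated logarithm together with the convolution bound $d(\hat F_n^h,F_h)\leq \tilde k\, c_d\, d_\infty(F_n,F)$, and then invert through part 2 of Lemma \ref{biasterm}, using Theorem \ref{hgegen0} to justify that the $o(1)$ there is $o(1)$ in $n$. Two small repairs are needed. First, in the normalization you chose the LIL reads $\limsup_n (2n/\log\log n)^{1/2} d_\infty(F_n,F)=1$ almost surely, not $1/2$; your subsequent bookkeeping silently uses the value $1$ (which is why the constants $c_d\varepsilon$ and $c_d(2\tilde k+2+\varepsilon)$ come out correctly), so only the statement of step (i) needs fixing, not anything downstream. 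Second, for step (ii) you cannot ``simply cite'' the auxiliary lemma on $d(\hat F_n^h,F_h)$, since that lemma only provides an unspecified $O((\log\log n/n)^{1/2})$ bound uniformly in $h$, whereas the lower bound in the theorem requires the sharp factor $\tilde k$; however, the convolution estimate you sketch does extend to $d_{kuip,k}$ (pull the sum over $i$ inside the integral against $|K_h|$ and note that the shifted points $a_i-u\leq b_i-u\leq\cdots$ remain ordered for each fixed $u$), and since that inequality is deterministic and holds for every $h$ simultaneously, the uniformity you worry about is automatic.
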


The theorem gives an upper and a lower bound on the selected bandwidth which are of the same order, and which again go to $0$ faster
than the optimal bandwidths according to most criteria. 

Exact results on the limiting behavior of the bandwidth chosen by the discrepancy principle can be 
obtained in the case where 
$s(n)$ converges to $0$ at a slower rate than $d(F_n,F)$. Noting that discrepancy principles based
on fixed quantiles or the law of the iterated logarithm lead to undersmoothing, 
\cite{ELRdisc,EggLaR1} introduce a rate-corrected version. 
For a symmetric, nonnegative kernel, they propose to choose $h$ as a solution of
 $$ d_\infty(F_n, \hat F_n^h) = 0.35 n^{-2/5}. $$ The choice of the exponent implies that the 
smoothing parameter goes to $0$ at the optimal rate. The next theorem is a generalization of the main
result in \cite{ELRdisc} and Chapter 7.6 of \cite{EggLaR1}. Our version is also applicable in the case of $d=d_{kuip,k}$ and allows
for  higher order kernels. 

\begin{theorem}\label{ELR}
For $f \in W^{\ell,1}$, $K$ kernel of order $\ell$ and $s(n) =
cn^{-\gamma}$ for $c>0$ and $0 <\gamma < 1/2$ we have almost surely:
\begin{align}
 h_{s,n} =  \left(\frac{c\ell!}{k_\ell
d(f^{(\ell-1)},0)}\right)^{\frac{1}{\ell}}
n^{-\frac{\gamma}{\ell}}\left(1 + o(1)\right). \label{ERbandbreite}
\end{align}
\end{theorem}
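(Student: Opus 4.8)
The plan is to mimic the strategy used in the proofs of Theorems \ref{dpnaiv} and \ref{SatzLIL}: sandwich $d(F_n,\hat F_n^{h_{s,n}})$ between the deterministic bias term $d(F_{h_{s,n}},F)$ and the stochastic fluctuation terms, and then invert. First I would write, with probability $1$,
\begin{align*}
\left| d(F_n,\hat F_n^{h_{s,n}}) - d(F_{h_{s,n}},F) \right| \le d(F_n,F) + d(\hat F_n^{h_{s,n}},F_{h_{s,n}}) = O\!\left(\sqrt{\log\log n / n}\right),
\end{align*}
using Lemma 2.2 (both parts) and the triangle inequality for $d$. Since $d(F_n,\hat F_n^{h_{s,n}}) = s(n) = c n^{-\gamma}$ with $\gamma < 1/2$, the left-hand bias term must also equal $c n^{-\gamma}(1+o(1))$, because $\sqrt{\log\log n/n} = o(n^{-\gamma})$. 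This is the crucial point where the slow decay of $s(n)$ is used: the stochastic error is genuinely negligible relative to the threshold, so $s(n)$ pins down the bias exactly, not just up to a constant factor as in Theorem \ref{SatzLIL}.

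Next I would invoke part 2 of Lemma \ref{biasterm}: since $h_{s,n} \to 0$ almost surely by Theorem \ref{hgegen0}, the expansion $d(F_h,F) = \frac{1}{\ell!}k_\ell\, d(f^{(\ell-1)},0)\, h^\ell(1+o(1))$ applies with $h = h_{s,n}$, where the $o(1)$ is for $h_{s,n}\to 0$ hence also for $n\to\infty$. Combining the two displays gives
\begin{align*}
\frac{1}{\ell!}k_\ell\, d(f^{(\ell-1)},0)\, h_{s,n}^\ell (1+o(1)) = c n^{-\gamma}(1+o(1)),
\end{align*}
and solving for $h_{s,n}$ yields exactly (\ref{ERbandbreite}). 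One should note that $d(f^{(\ell-1)},0)>0$ (otherwise $f^{(\ell-1)}\equiv 0$, forcing $f$ to be polynomial and not integrable), so the division is legitimate; also for $d=d_{kuip,k}$ one uses the version of Lemma \ref{biasterm} with the appropriate $d$ throughout, which is why no $c_d$ factors appear in the final formula.

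The main obstacle, such as it is, is making sure the two $o(1)$ terms are genuinely uniform and compatible: the $o(1)$ in Lemma \ref{biasterm} is an analytic statement about $h\to 0$, and one must confirm that substituting the \emph{random} sequence $h_{s,n}$ is valid — this is exactly the remark already made in the text after Lemma \ref{biasterm}, that since $h_{s,n}\to 0$ a.s., any function that is $o(1)$ as $h\to 0$ is $o(1)$ as $n\to\infty$ along $h_{s,n}$. A secondary point is that $h\longrightarrow d(F_h,F)$ need not be monotone for higher-order kernels, so one cannot argue via inverting a monotone function; but one does not need monotonicity here, only the asymptotic equivalence of the two sides, so the argument goes through unchanged. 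Everything else is routine algebra.
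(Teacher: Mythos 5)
Your proposal matches the paper's own proof essentially step for step: the same triangle-inequality sandwich $|d(F_n,\hat F_n^{h_{s,n}})-d(F,F_{h_{s,n}})|=O(\sqrt{\log\log n/n})$, the same appeal to Lemma \ref{biasterm} combined with the observation (via Theorem \ref{hgegen0}) that the $o(1)$ term for $h\to 0$ is also $o(1)$ for $n\to\infty$, and the same final inversion using $\sqrt{\log\log n/n}=o(n^{-\gamma})$. Your additional remarks on $d(f^{(\ell-1)},0)>0$ and on not needing monotonicity are correct but not needed beyond what the paper records.
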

\begin{proof}
Using the triangle inequality, we have with probability $1$ that
\begin{align*}
| d(F_n, \hat F_n^{h_{s,n}}) - d(F,F_{h_{s,n}}) | \leq d(F_{h_{s,n}}, \hat
F_n^{h_{s,n}}) + d(F,F_n)=O \left( \sqrt{\frac{\log \log n}{n}} \right) .
\end{align*}
Combining this with Lemma \ref{biasterm} and again observing that, by Theorem \ref{hgegen0}, the $o(1)$ term for $h_{s,n}\rightarrow 0$
 is also of order $o(1)$ for $n \rightarrow \infty$, we have
\begin{align*}
\frac{1}{\ell!} k_\ell d(f^{(\ell-1)},0) {h_{s,n}}^\ell(1+o(1))
&= cn^{-\gamma} + O\left((\log\log n/n)^{1/2}\right) 
\end{align*}
which implies that 
\begin{align*}
{h_{s,n}}
&= \left(\frac{c\ell!}{k_\ell d(f^{(\ell-1)},0)}\right)^{\frac{1}{\ell}}
n^{-\frac{\gamma}{\ell}}\left(1 + o(1)\right).
\end{align*}
\end{proof}

The theorem implies that for a kernel of order $\ell$ and a threshold function of the form 
$s(n)=cn^{-\gamma}$ with $\gamma=\ell/(2\ell+1)$ the chosen bandwidth is -- 
for sufficiently smooth $f$ -- of the optimal order $h = \alpha n^{-1/(2\ell+1)}$ with respect to 
the $L_1$ or $L_2$ risks. The constant $\alpha$ depends on $c$ and the unknown true density $f$ 
and is not equal to the optimal one according to any of the standard criteria. Eggermont and LaRiccia
choose $c=0.35$ based on simulations. Noting that $ s(n)=cn^{-2/5} = (cn^{1/10})n^{-1/2}$ we can interpret the threshold function in 
terms of confidence levels that depend on $n$. For $c=0.35$, the confidence level is below $0.5$ up to $n=5624$.

In principle, constants suitable for other classes of densities, other distances or higher order kernels 
can also be chosen using simulations. But Theorem \ref{ELR} also allows for a different approach: Discrepancy 
principles that can be guaranteed to asymptotically choose the optimal bandwidths for a reference density.  
In the following example, we will sketch this approach for the normal distribution and the $L_2$-optimal bandwidth.
\begin{example}\label{l2ref}
The asymptotically $L_2$-optimal bandwidth for a kernel
of order $\ell$ is given by  
\begin{align}
 h_{opt}=\left( \frac{{\ell!}^2 \|K\|^2_2}{2\ell k_\ell^2\|f^{(\ell)}\|_2^2} \right)^{\frac{1}{2\ell+1}}n^{-\frac{1}{2\ell+1}}, \label{L2opth}
\end{align}
\citep[p.33]{WandJones}. Equating (\ref{ERbandbreite}) and (\ref{L2opth}) yields 
$\gamma=\frac{\ell}{2\ell+1}$ and 
\begin{align}
 c=\left(\frac{\|K\|^{2\ell}_2k_\ell}{(2\ell)^\ell \ell!}\right)^{\frac{1}{2\ell+1}}
\frac{d(f^{(\ell-1)},0)}{\|f^{(\ell)}\|_2^{2\ell/(2\ell+1)}}. \label{NRc}
\end{align}
The first factor only depends on the kernel and is invariant w.r.t. rescaling of the kernel.
The second factor only depends on the shape of $f$ and does not change when $f$ is translated or rescaled. 
For $f$ the standard normal density, we obtain $c=0.1357$ for the Gaussian and $c=0.1331$ for the 
Epanechnikov kernel (\ref{epa}) when using $d=d_\infty$. Both values are much smaller than $c=0.35$ as suggested by 
\cite{ELRdisc} independent of the kernel. This choice forces the estimate of the distribution function to 
lie extremely close to the empirical distribution function, causing severe undersmoothing even for very 
large sample sizes. For $d=d_{kuip,1}$, we obtain $c=0.2715$ for the Gaussian and $c=0.2661$ for the Epanechnikov kernel.
Similar calculations can be carried out for a bandwidth minimizing an upper bound for the $L_1$ risk, but 
these lead to even smaller values of $c$ \citep[pp. 44-45]{Diss}. 
\end{example}

\section{Simulation Study}

\begin{figure}
\begin{center}
\includegraphics[width=11cm,height=8.25cm]{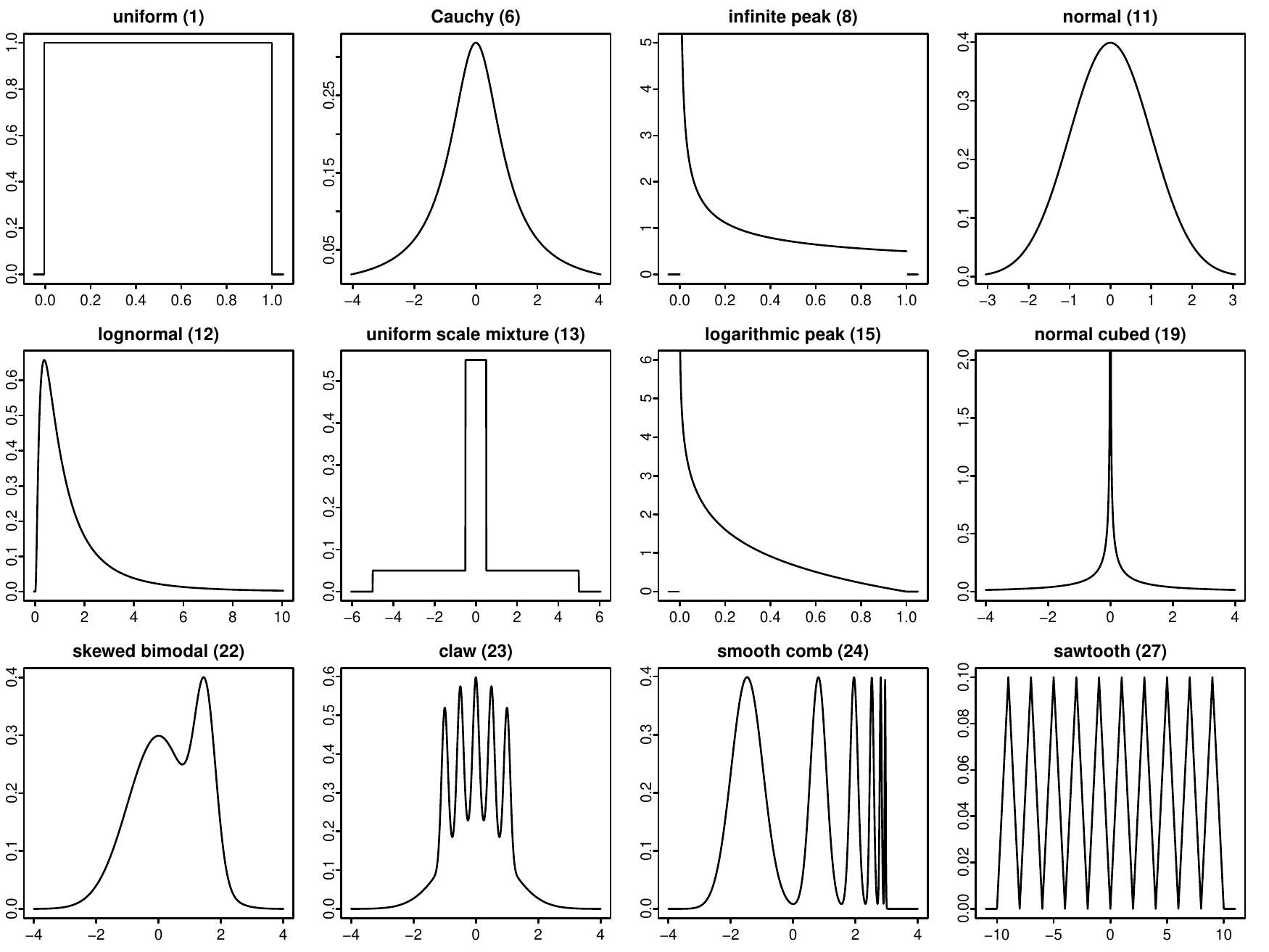}
\caption{The densities used.}\label{testbeds}
\end{center}
\end{figure}
In this section, we explore how well different versions of the discrepancy principle work in practice. Mainly in the 1980s and 1990s, several large
simulations studies on bandwidth choice methods for kernel density estimators have been conducted, of which we just mention \cite{CaoCueGon94study} 
(with a focus on the $L_2$-risk) and  \cite{BerDev} and \cite{Dev99universal} in an $L_1$-context. To the best of our knowledge, there is no larger
simulation study on kernel estimators that includes any version of the discrepancy principle, although in \cite{Dev99universal} 
the version proposed in \cite{ELRdisc} is mentioned but not included in the study. There are some smaller simulation 
studies to be found in the publications in which a particular version of the discrepancy principle is suggested or directly building on these 
\citep{Mar89density,ELRdisc,EggLaR1}. 
Our simulation study is a replication of a part of the more extensive study described in \cite{Diss}. 
The aim is not to find a 'best' method but to explore whether methods based on the discrepancy principle perform 
reasonably well at all. Since the discrepancy principle is not designed with any
specific risk in mind, we look at both the $L_1$- and $L_2$-risk (where applicable).

We use the Epanechnikov kernel as given in (\ref{epa}) and choose the bandwidth as a solution of  
$d(F_n,\hat F_n^h)=s(n)$. In \cite{ELRdisc}, a secant method is proposed for solving this equation, but we use the
related regula falsi which we found to be more stable. Occasionally, there may be multiple solutions but we ignore this and
take the first solution found. 

We compare the following versions of the discrepancy principle:
\begin{itemize}
 \item Two versions based on the $0.5$ and $0.95$ quantiles of the Kolmogorov-Smirnov statistic: $d=d_\infty$ and 
$s(n)=cn^{-1/2}$ with $c=0.83$ and $c=1.36$. These methods are denoted by {\bf KS .5} and {\bf KS .95}, respectively.
 \item The version proposed by Vapnik: $d=d_\infty$ and 
$s(n)=0.6n^{-1/2}$. Denoted by {\bf V}.
 \item The rate-corrected version proposed by Eggermont and LaRiccia: $d=d_\infty$ and $s(n)=0.35n^{-2/5}$. Denoted by {\bf E-LR}.
In contrast to the other versions considered here, this one uses a threshold function
for which the assumptions in Theorem \ref{konsistenz} are fulfilled.
 \item Two versions based on $0.5$ and $0.95$ quantiles of the Kuiper statistic: $d=d_{kuip,1}$ and 
$s(n)=cn^{-1/2}$ with $c=1.22$ and $c=1.75$. Denoted by {\bf Kuip .5} and {\bf Kuip .95}, respectively.
 \item The method based on a normal reference density as given in Example \ref{l2ref}: $d=d_\infty$ and $s(n)=0.1331n^{-2/5}$. Denoted by {\bf L2NR}. 
\end{itemize}

For comparison, we include $L_2$ cross-validation as described in \cite{CelRob08cv} 
(their Formula 13 with $p=1$). This is denoted by ${\bf L2CV}$. The more extensive simulations
in \cite{Diss} include several more variants of the discrepancy principle, a few more standard methods for comparison, and all of
the 28 densities from \cite{BerDev}. For the sake of brevity, here we just focus on a smaller subset but the conclusions 
are largely the same. 

We draw 250 samples of sizes 100, 1000 and 2500 from 12 of the 28 test bed densities introduced in \cite{BerDev}. 
For this, we use the {\tt R}-package {\tt benchden} \citep{benchden,benchdenpaper}. The set of densities is depicted in 
Figure \ref{testbeds}. We use the same numbering for the densities as in \cite{BerDev}.

\begin{figure}[t]
\begin{center}
\includegraphics[width=11cm,height=9cm]{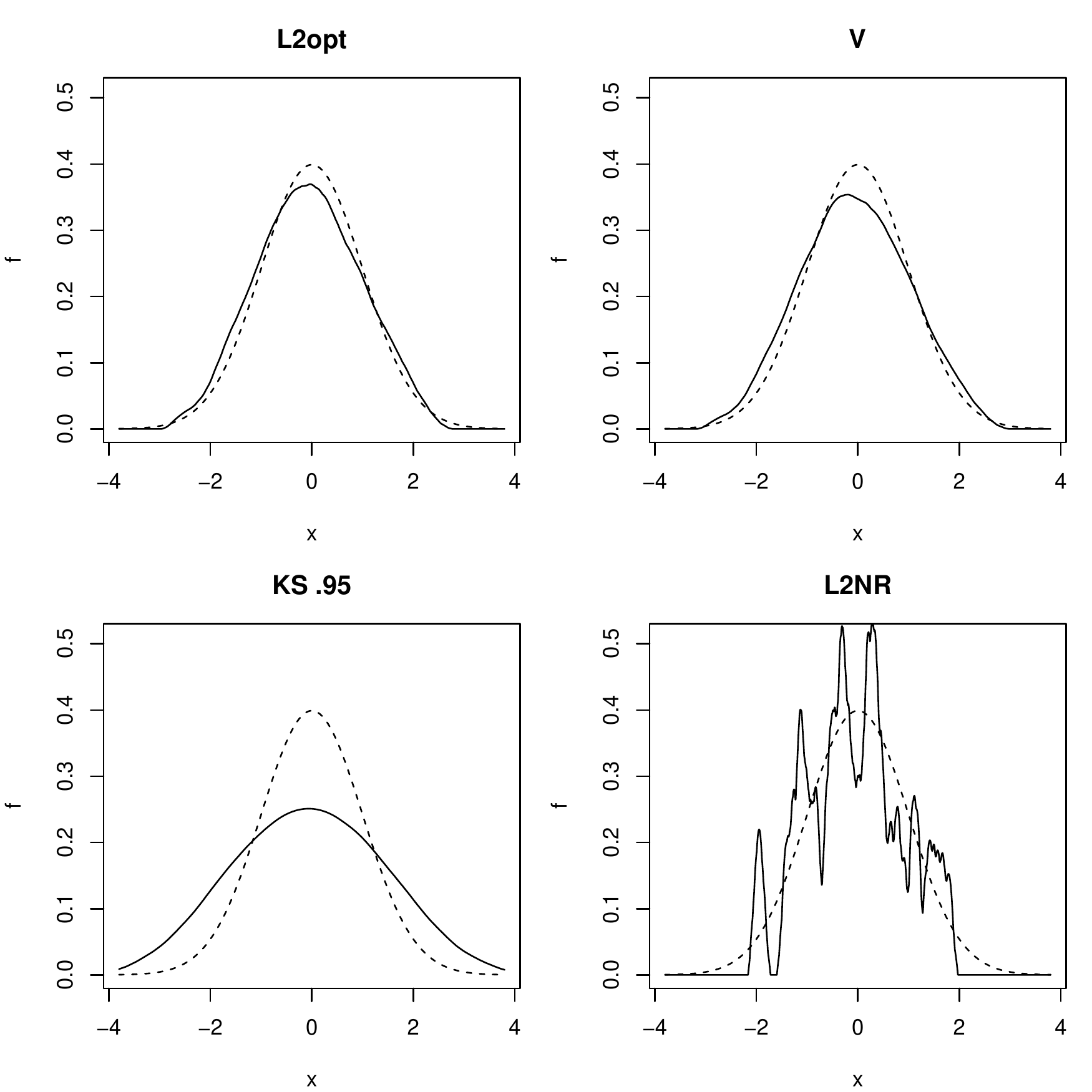}
\caption{Kernel estimates for a normal sample ($n=100$). The bandwidth used are the 
$L_2$-optimal choice (\ref{L2opth}) ({\bf L2opt}) and three variants of the discrepancy principle.}\label{bspnormbild}
\end{center}
\end{figure}

Figure \ref{bspnormbild} shows typical kernel estimates for a normal sample of size $100$.
In the first panel, the $L_2$-optimal bandwidth  (\ref{L2opth}) was chosen. 
The second panel shows the result obtained using {\bf V}, which gives 
a fairly good result. The bandwidth chosen using {{\bf KS .95}} is obviously too large, 
although it will be too small asymptotically. 
The bandwidth in the fourth panel has been chosen using 
{\bf L2NR}. Although this will asymptotically result in the 
optimal bandwidth, the estimate is severely undersmoothed. 

The estimated $L_1$ and (squared) $L_2$ risks and the arithmetic means of the chosen bandwidths for all densities and 
sample sizes considered here are 
given in Tables \ref{kerL1}, \ref{kerL2} and \ref{kerbw}, respectively. The smallest risk for each scenario has been 
highlighted. Note that Table \ref{kerL2} omits densities 8 and 19, 
since these are not in $L_2$.

\begin{table}[t]
\begin{center}
{\footnotesize\begin{tabular}{llllllllll}
Density & n & L2CV & E-LR & V & KS .5 & KS .95 & Kuip .5 & Kuip .95 & L2NR \\ 
   \hline
   \hline
1 & 100 & 0.2518 & \underline{\bf 0.2421 } & 0.2511 & 0.3018 & 0.4735 & 0.2884 & 0.3667 & 0.3871 \\ 
   & 1000 & 0.1043 & 0.1095 & \underline{\bf 0.1035 } & 0.1204 & 0.1831 & 0.1086 & 0.1341 & 0.1132 \\ 
   & 2500 & 0.0774 & 0.0775 & \underline{\bf 0.0734 } & 0.0803 & 0.1131 & 0.0744 & 0.085 & 0.0819 \\ 
   \hline
6 & 100 & 0.3316 & 0.3324 & \underline{\bf 0.3293 } & 0.3583 & 0.5169 & 0.3442 & 0.4163 & 0.6787 \\ 
   & 1000 & 0.1637 & 0.158 & 0.1581 & 0.1627 & 0.2097 & \underline{\bf 0.1576 } & 0.1714 & 0.23 \\ 
   & 2500 & 0.1242 & 0.1247 & 0.1248 & 0.1258 & 0.1493 & \underline{\bf 0.1237 } & 0.1294 & 0.1547 \\ 
   \hline
8 & 100 & 0.3784 & 0.3892 & 0.3741 & \underline{\bf 0.3521} & 0.4426 & 0.3608 & 0.4368 & 0.9115 \\ 
   & 1000 & 0.3584 & 0.2679 & 0.3021 & 0.2365 & \underline{\bf 0.1892 } & 0.2222 & 0.1907 & 0.6479 \\ 
   & 2500 & 0.3576 & 0.2361 & 0.2903 & 0.2214 & \underline{\bf 0.1608 } & 0.2047 & 0.1628 & 0.5845 \\ 
   \hline
11 & 100 & 0.1571 & \underline{\bf 0.1562 } & 0.1586 & 0.2082 & 0.4108 & 0.1886 & 0.2861 & 0.4559 \\ 
   & 1000 & 0.0613 & 0.0641 & \underline{\bf 0.0604 } & 0.0735 & 0.1326 & 0.0627 & 0.0859 & 0.1047 \\ 
   & 2500 & 0.0432 & 0.0472 & \underline{\bf 0.0428 } & 0.0503 & 0.0864 & 0.0438 & 0.0567 & 0.0625 \\ 
   \hline
12 & 100 & 0.2863 & 0.2758 & \underline{\bf 0.2744 } & 0.2937 & 0.4023 & 0.2986 & 0.3684 & 0.5874 \\ 
   & 1000 & 0.1313 & \underline{\bf 0.1249 } & 0.1251 & 0.1267 & 0.1471 & 0.1272 & 0.1416 & 0.167 \\ 
   & 2500 & 0.0959 & \underline{\bf 0.0897 } & 0.09 & 0.0903 & 0.1019 & 0.0907 & 0.0992 & 0.1083 \\ 
   \hline
13 & 100 & 0.3723 & \underline{\bf 0.3627} & 0.3647 & 0.4048 & 0.596 & 0.3922 & 0.4764 & 0.707 \\ 
   & 1000 & 0.1945 & 0.1816 & \underline{\bf 0.1808} & 0.1862 & 0.2251 & 0.1813 & 0.1936 & 0.2213 \\ 
   & 2500 & 0.151 & 0.1362 & 0.1363 & 0.1373 & 0.1583 & \underline{\bf 0.1356} & 0.1401 & 0.1627 \\ 
   \hline
15 & 100 & 0.3072 & 0.2946 & \underline{\bf 0.293} & 0.305 & 0.3876 & 0.3175 & 0.38 & 0.5593 \\ 
   & 1000 & 0.1668 & 0.1464 & 0.1534 & 0.1415 & 0.147 & \underline{\bf 0.1399} & 0.1482 & 0.2257 \\ 
   & 2500 & 0.1375 & 0.1114 & 0.1217 & 0.1088 & \underline{\bf 0.1036} & 0.1047 & 0.1041 & 0.1742 \\ 
   \hline
19 & 100 & 1.0525 & 1.0695 & 1.013 & 0.8101 & \underline{\bf 0.7173} & 0.8545 & 0.7274 & 1.6579 \\ 
   & 1000 & 1.4559 & 0.9915 & 1.107 & 0.8598 & \underline{\bf 0.5813} & 1.0131 & 0.7704 & 1.6414 \\ 
   & 2500 & 1.5706 & 0.9773 & 1.1507 & 0.9181 & \underline{\bf 0.5981} & 1.0842 & 0.8361 & 1.6392 \\ 
   \hline
22 & 100 & 0.1979 & \underline{\bf 0.1931} & 0.1985 & 0.2477 & 0.3981 & 0.2423 & 0.3232 & 0.4385 \\ 
   & 1000 & \underline{\bf 0.0804} & 0.0896 & 0.0832 & 0.1003 & 0.1531 & 0.0949 & 0.1299 & 0.1027 \\ 
   & 2500 & \underline{\bf 0.0566} & 0.0649 & 0.0577 & 0.0683 & 0.1015 & 0.0641 & 0.0856 & 0.0644 \\ 
   \hline
23 & 100 & 0.3829 & \underline{\bf 0.379} & 0.3826 & 0.4139 & 0.5278 & 0.4048 & 0.4592 & 0.4858 \\ 
   & 1000 & 0.3498 & 0.2826 & 0.2364 & 0.3301 & 0.3546 & 0.274 & 0.35 & \underline{\bf 0.1392} \\ 
   & 2500 & 0.2035 & 0.2158 & 0.1609 & 0.2371 & 0.3454 & 0.1813 & 0.27 & \underline{\bf 0.0922} \\ 
   \hline
24 & 100 & \underline{\bf 0.3775} & 0.4421 & 0.4743 & 0.6363 & 0.8741 & 0.5901 & 0.7567 & 0.4773 \\ 
   & 1000 & 0.173 & 0.2665 & 0.2369 & 0.3102 & 0.4793 & 0.2583 & 0.3487 & \underline{\bf 0.1695} \\ 
   & 2500 & 0.1274 & 0.2035 & 0.1689 & 0.2174 & 0.3394 & 0.1805 & 0.239 & \underline{\bf 0.1247} \\ 
   \hline
27 & 100 & 0.6238 & 0.583 & 0.5966 & 0.6489 & 0.7636 & 0.6376 & 0.6985 & \underline{\bf 0.4514} \\ 
   & 1000 & 0.5965 & 0.5366 & 0.4653 & 0.5488 & 0.5857 & 0.52 & 0.5337 & \underline{\bf 0.1682} \\ 
   & 2500 & 0.5339 & 0.4262 & 0.3127 & 0.4663 & 0.5398 & 0.3493 & 0.5147 & \underline{\bf 0.1275} \\ 
   \hline
\end{tabular}
}\caption{Results of the simulation study: Estimated $L_1$ risk for kernel estimators}
\label{kerL1}
\end{center}
\end{table}

\begin{table}[t]
\begin{center}
{\footnotesize\begin{tabular}{llllllllll}
Density & n & L2CV & E-LR & V & KS .5 & KS .95 & Kuip .5 & Kuip .95 & L2NR \\ 
   \hline
   \hline
1 & 100 & 0.0774 & \underline{\bf 0.0726} & 0.0755 & 0.0939 & 0.152 & 0.0889 & 0.117 & 0.2511 \\ 
   & 1000 & \underline{\bf 0.0205} & 0.0248 & 0.022 & 0.0295 & 0.0526 & 0.0245 & 0.0349 & 0.0215 \\ 
   & 2500 & \underline{\bf 0.012} & 0.0156 & 0.013 & 0.0169 & 0.0299 & 0.0138 & 0.019 & 0.0122 \\ 
   \hline
6 & 100 & \underline{\bf 0.008} & \underline{\bf 0.008} & 0.0082 & 0.0129 & 0.0336 & 0.0111 & 0.0203 & 0.0529 \\ 
   & 1000 & \underline{\bf 0.0014} & 0.0016 & \underline{\bf 0.0014} & 0.0021 & 0.0055 & 0.0016 & 0.0027 & 0.0028 \\ 
   & 2500 & \underline{\bf 6e-04} & 8e-04 & \underline{\bf 6e-04} & 9e-04 & 0.0024 & 7e-04 & 0.0012 & 0.001 \\ 
   \hline
11 & 100 & 0.0072 & 0.007 & \underline{\bf 0.0069} & 0.0104 & 0.0332 & 0.0088 & 0.0177 & 0.076 \\ 
   & 1000 & 0.0011 & 0.0011 & \underline{\bf 0.001} & 0.0014 & 0.004 & 0.0011 & 0.0018 & 0.0036 \\ 
   & 2500 & 6e-04 & 6e-04 & \underline{\bf 5e-04} & 7e-04 & 0.0017 & 6e-04 & 8e-04 & 0.0012 \\ 
   \hline
12 & 100 & 0.0235 & \underline{\bf 0.0213} & 0.0221 & 0.0309 & 0.0639 & 0.0325 & 0.0541 & 0.1137 \\ 
   & 1000 & \underline{\bf 0.0043} & 0.0049 & 0.0044 & 0.0059 & 0.0118 & 0.006 & 0.0103 & 0.0063 \\ 
   & 2500 & \underline{\bf 0.0022} & 0.0027 & \underline{\bf 0.0022} & 0.0029 & 0.0058 & 0.003 & 0.0052 & 0.0025 \\ 
   \hline
13 & 100 & 0.039 & \underline{\bf 0.0379} & 0.0394 & 0.0511 & 0.1017 & 0.048 & 0.0686 & 0.1124 \\ 
   & 1000 & \underline{\bf 0.0124} & 0.015 & 0.0137 & 0.017 & 0.0264 & 0.0148 & 0.0192 & 0.0126 \\ 
   & 2500 & 0.0077 & 0.0103 & 0.0088 & 0.011 & 0.0169 & 0.0094 & 0.0122 & \underline{\bf 0.0076} \\ 
   \hline
15 & 100 & 0.3402 & \underline{\bf 0.2946} & 0.3034 & 0.3663 & 0.5308 & 0.397 & 0.5167 & 0.7157 \\ 
   & 1000 & 0.1059 & 0.1104 & \underline{\bf 0.1043} & 0.122 & 0.181 & 0.1338 & 0.1836 & 0.1262 \\ 
   & 2500 & 0.0688 & 0.075 & \underline{\bf 0.0682} & 0.0784 & 0.1128 & 0.0862 & 0.1168 & 0.0788 \\ 
   \hline
22 & 100 & 0.0124 & \underline{\bf 0.0119} & 0.0125 & 0.0181 & 0.0337 & 0.0176 & 0.0263 & 0.0725 \\ 
   & 1000 & \underline{\bf 0.0021} & 0.0028 & 0.0023 & 0.0036 & 0.0081 & 0.0032 & 0.006 & 0.0034 \\ 
   & 2500 & \underline{\bf 0.001} & 0.0015 & 0.0011 & 0.0017 & 0.0039 & 0.0014 & 0.0027 & 0.0013 \\ 
   \hline
23 & 100 & \underline{\bf 0.0536} & 0.0546 & 0.0544 & 0.0572 & 0.0768 & 0.0561 & 0.0639 & 0.1051 \\ 
   & 1000 & 0.0485 & 0.034 & 0.0238 & 0.0457 & 0.0472 & 0.0319 & 0.0501 & \underline{\bf 0.0074} \\ 
   & 2500 & 0.0236 & 0.0202 & 0.011 & 0.0244 & 0.05 & 0.014 & 0.0317 & \underline{\bf 0.0033} \\ 
   \hline
24 & 100 & \underline{\bf 0.0444} & 0.0547 & 0.0597 & 0.0856 & 0.127 & 0.0777 & 0.1073 & 0.0813 \\ 
   & 1000 & \underline{\bf 0.0116} & 0.028 & 0.0239 & 0.034 & 0.0578 & 0.0269 & 0.0392 & 0.0117 \\ 
   & 2500 & \underline{\bf 0.006} & 0.0202 & 0.0153 & 0.022 & 0.0378 & 0.017 & 0.0248 & 0.0075 \\ 
   \hline
27 & 100 & 0.0205 & 0.0198 & 0.02 & 0.0211 & 0.0239 & 0.0208 & 0.0222 & \underline{\bf 0.0185} \\ 
   & 1000 & 0.0192 & 0.0188 & 0.0146 & 0.0191 & 0.0192 & 0.0178 & 0.0176 & \underline{\bf 0.0023} \\ 
   & 2500 & 0.0171 & 0.0123 & 0.007 & 0.0146 & 0.0181 & 0.0085 & 0.0174 & \underline{\bf 0.0014} \\ 
\hline
\end{tabular}
}\caption{Results of the simulation study: Estimated squared $L_2$ risk for kernel estimators}
\label{kerL2}
\end{center}
\end{table}

\begin{table}[t]
\begin{center}
{\footnotesize\begin{tabular}{llllllllll}
Density & n & L2CV & E-LR & V & KS .5 & KS .95 & Kuip .5 & Kuip .95 & L2NR \\ 
   \hline
   \hline
1 & 100 & 0.2267 & 0.2206 & 0.2443 & 0.3609 & 0.6297 & 0.3342 & 0.4784 & 0.0292 \\ 
   & 1000 & 0.0651 & 0.1073 & 0.0913 & 0.1286 & 0.2149 & 0.1055 & 0.1502 & 0.0363 \\ 
   & 2500 & 0.0381 & 0.077 & 0.0599 & 0.0838 & 0.139 & 0.0663 & 0.0946 & 0.0277 \\ 
   \hline
6 & 100 & 1.1911 & 1.1651 & 1.2968 & 1.9599 & 3.6053 & 1.7881 & 2.6064 & 0.148 \\ 
   & 1000 & 0.6465 & 0.8374 & 0.725 & 0.9776 & 1.4834 & 0.8259 & 1.1175 & 0.2384 \\ 
   & 2500 & 0.5264 & 0.7162 & 0.5793 & 0.7652 & 1.1179 & 0.644 & 0.8551 & 0.2483 \\ 
   \hline
8 & 100 & 0.0862 & 0.0439 & 0.0511 & 0.0946 & 0.2425 & 0.1193 & 0.2323 & 0.0061 \\ 
   & 1000 & 0.0036 & 0.0062 & 0.0046 & 0.0087 & 0.0232 & 0.0108 & 0.0223 & 0.001 \\ 
   & 2500 & 0.0013 & 0.003 & 0.0018 & 0.0035 & 0.0092 & 0.0043 & 0.0088 & 4e-04 \\ 
   \hline
11 & 100 & 0.9621 & 0.9097 & 1.0118 & 1.4927 & 2.4632 & 1.3745 & 1.9095 & 0.0977 \\ 
   & 1000 & 0.5867 & 0.711 & 0.6137 & 0.8288 & 1.2229 & 0.6992 & 0.9389 & 0.1732 \\ 
   & 2500 & 0.4932 & 0.6253 & 0.5052 & 0.6677 & 0.9607 & 0.5581 & 0.7385 & 0.1942 \\ 
   \hline
12 & 100 & 0.4389 & 0.4423 & 0.4865 & 0.6981 & 1.2262 & 0.7238 & 1.0752 & 0.0654 \\ 
   & 1000 & 0.1945 & 0.2675 & 0.2374 & 0.3063 & 0.4542 & 0.3081 & 0.4212 & 0.0996 \\ 
   & 2500 & 0.1456 & 0.2163 & 0.1815 & 0.2293 & 0.3285 & 0.2326 & 0.3108 & 0.0994 \\ 
   \hline
13 & 100 & 0.4289 & 0.4419 & 0.4889 & 0.7277 & 1.3762 & 0.6792 & 0.9784 & 0.0684 \\ 
   & 1000 & 0.1219 & 0.2108 & 0.1791 & 0.2533 & 0.4253 & 0.2072 & 0.2962 & 0.0716 \\ 
   & 2500 & 0.0723 & 0.1507 & 0.1167 & 0.1641 & 0.2737 & 0.1308 & 0.1871 & 0.0538 \\ 
   \hline
15 & 100 & 0.0903 & 0.0631 & 0.0703 & 0.1104 & 0.2223 & 0.1286 & 0.2116 & 0.0113 \\ 
   & 1000 & 0.0147 & 0.0208 & 0.0173 & 0.0257 & 0.0477 & 0.0304 & 0.0486 & 0.0065 \\ 
   & 2500 & 0.0077 & 0.0134 & 0.01 & 0.0148 & 0.027 & 0.018 & 0.0284 & 0.0043 \\ 
   \hline
19 & 100 & 0.0505 & 0.0327 & 0.0423 & 0.1231 & 0.6511 & 0.0916 & 0.263 & 0.0018 \\ 
   & 1000 & 4e-04 & 0.0029 & 0.0018 & 0.0051 & 0.0235 & 0.0026 & 0.0077 & 1e-04 \\ 
   & 2500 & 1e-04 & 0.0011 & 5e-04 & 0.0014 & 0.0066 & 7e-04 & 0.002 & 1e-04 \\ 
   \hline
22 & 100 & 0.8416 & 0.8513 & 0.9462 & 1.3895 & 2.3862 & 1.3423 & 1.937 & 0.1067 \\ 
   & 1000 & 0.4001 & 0.5509 & 0.4857 & 0.6315 & 0.9311 & 0.5928 & 0.8078 & 0.1784 \\ 
   & 2500 & 0.3116 & 0.4587 & 0.3814 & 0.4865 & 0.6895 & 0.4525 & 0.6014 & 0.1844 \\ 
   \hline
23 & 100 & 0.7869 & 0.6109 & 0.7071 & 1.1304 & 1.9354 & 1.0402 & 1.4977 & 0.0718 \\ 
   & 1000 & 0.5907 & 0.3161 & 0.2601 & 0.4032 & 0.7562 & 0.3033 & 0.4961 & 0.0945 \\ 
   & 2500 & 0.2641 & 0.2485 & 0.1956 & 0.2701 & 0.4789 & 0.2151 & 0.3065 & 0.0917 \\ 
   \hline
24 & 100 & 0.4771 & 0.7096 & 0.8014 & 1.2902 & 2.7506 & 1.1366 & 1.815 & 0.097 \\ 
   & 1000 & 0.1068 & 0.4089 & 0.3443 & 0.4997 & 0.8546 & 0.3915 & 0.5795 & 0.1214 \\ 
   & 2500 & 0.0604 & 0.3038 & 0.2328 & 0.3314 & 0.5653 & 0.257 & 0.3731 & 0.1004 \\ 
   \hline
27 & 100 & 5.7691 & 3.876 & 4.3972 & 6.8032 & 12.1572 & 6.2664 & 9.1695 & 0.4075 \\ 
   & 1000 & 4.7022 & 1.6839 & 1.3201 & 2.1725 & 3.9145 & 1.5474 & 2.6182 & 0.4851 \\ 
   & 2500 & 2.9109 & 1.2055 & 0.938 & 1.3235 & 2.405 & 1.0177 & 1.5117 & 0.4597 \\ 
   \hline
\end{tabular}
}\caption{Results of the simulation study: Chosen Bandwidth}
\label{kerbw}
\end{center}
\end{table}

In most cases, either {\bf L2CV} or one of {\bf V} and {\bf E-LR}, 
which perform very similarly, is the best method with respect to the $L_2$-risk. 
Although {\bf L2CV} usually selects smaller bandwidths than {\bf V} and {\bf E-LR}, 
the resulting risks are close in most cases. The methods based on quantiles of the Kolmogorov or 
Kuiper statistics ({\bf KS .5}, {\bf KS .95}, {\bf Kuip .5} and  {\bf Kuip .95}) choose larger bandwidths, which results in oversmoothing in most cases (although, by Theorem \ref{dpnaiv},
 these methods asymptotically suffer from undersmoothing). The methods based on the Kuiper statistic are usually better than
those based on the corresponding quantiles of the Kolmogorov-Smirnov statistic. The rather large amount of smoothing chosen
by these methods is beneficial w.r.t. $L_1$ risk for the Cauchy density (number 6), which is mainly due to the fact
that the $L_1$ loss penalizes errors in the tails quite heavily. 

The method {\bf L2NR}
chooses bandwidths that are much smaller than those chosen by the other methods. Although it is guaranteed to 
asymptotically choose the $L_2$ optimal bandwidth for the normal density, 
the results for both $L_1$ and $L_2$ risk are poor even when the true density is the normal (number 11). The small
bandwidths seem to be helpful for capturing the fine structure of multimodal densities 23, 27 and (less pronounced)
24. 

Except for 8, 15 and 19 all densities are bounded and hence fulfill the assumptions
of Theorem \ref{konsistenz}, such that the estimate based on a bandwidth chosen 
using {\bf E-LR} will be almost surely consistent w.r.t. the $L_1$-distance. 

Density 8 is the density of $U^2$, where $U$ is a 
uniform random variable on $[0,1]$. This corresponds to the density in Example
\ref{dpinkonsistenz} for $\varepsilon=1/2$, such that using {\bf E-LR} to select the bandwidth 
will result in a consistent estimate w.r.t. $L_1$-loss. The density is not in $L_2$.
With respect to $L_1$-risk, at least for larger sample sizes all versions of the discrepancy
principle except {\bf L2NR} perform better than {\bf L2CV}.  
The distribution function corresponding to density 15 is in $C^{0,\alpha}$ for any
$\alpha < 1$, and hence using {\bf E-LR} to select the bandwidth will also lead to $L_1$-consistent
estimates by Theorem \ref{konsistenz} and Corollary \ref{konscorr}. This density is in $L_2$.
Again, {\bf L2CV} performs worse than all variants of the discrepancy principle except
{\bf L2NR}.
Density number 19 is the density of $N^3$, where $N$ is a standard normal random variable. 
It is not in $L_2$ and it can be shown that 
$$ |F(h)-F\ast K_h (h)| = \frac{1}{\sqrt{2\pi}}h^{\frac{1}{3}}(1+o(1)),$$
where $F$ is the distribution function and $K_h$ the Epanechnikov-kernel with bandwidth $h$.
Hence, for $h$ small enough, the conditions of Theorem \ref{dpinkonsistenzthm}
are fulfilled with $\varepsilon=1/3$ and any $0< c < (\sqrt{2\pi})^{-1}$. From this it follows that
every version of the discrepancy principle considered in the simulation study 
will lead to inconsistent estimates w.r.t. $L_1$-loss almost surely.     
The main Theorem in \cite{Dev89nonconsistency}, 
which states that selecting the bandwidth 
using {\bf L2CV} will lead to $L_1$-inconsistent estimators for any sufficiently sharply peaked density  is not applicable to density 19. However, Table \ref{kerL1} shows
that {\bf L2CV} performs even worse than most versions of the discrepancy principle
in our simulations (with the versions choosing larger bandwidths doing relatively better). 

Overall, if the discrepancy principle is to be used for choosing a bandwidths,
from the simulations it seems that {\bf V} and {\bf E-LR} would be the 
versions of choice. Although they perform similarly in the simulation study, 
there are good theoretical reasons for preferring {\bf E-LR} as 
consistency can be guaranteed can be guaranteed for a large class of densities.
Generally, the simulations show that the asymptotic results are of limited use 
for the sample sizes considered here (even for $n=2500$!). This is not so much of 
a surprise since the asymptotic analysis is largely based on the law of the iterated logarithm.

\section{Conclusions}

The discrepancy principle is a fast and simple method of parameter choice that is also easy to implement. Although it 
is very popular in other branches of applied mathematics (namely in ill-posed problems theory), it has only rarely 
been used in density estimation. While there are many shortcomings -- it is not optimal in any sense and it can even 
lead to inconsistent estimates for some densities with infinite peaks --, 
some variants do work surprisingly well 
for a large set of different densities in simulations and consistency can -- at least for some
versions -- be guaranteed for a large class of densities including all square-integrable ones. 
The simulations also show that the behavior of methods based 
on the discrepancy principle may be quite different from the asymptotic behavior even for sample sizes as 
large as $n=2500$. Generally, asymptotic results do not help much in choosing the threshold function $s(n)$ -- 
the most striking example being the $L_2$ normal reference version {\bf L2NR} which is guaranteed to asymptotically
choose the $L_2$ optimal bandwidth for the normal distribution but performs very poorly even when the true density is the normal.    
Also taking into account the inconsistency for some densities (a problem that is actually shared by many popular 
bandwidth selectors), the method cannot be recommended in general.

\section*{Acknowledgement}

Large parts of the present work were part of the author's Ph.D. thesis \citep{Diss} at the Faculty of Statistics of the TU Dortmund University and has been supported
by the Collaborative Research Center {\em Statistical modeling of nonlinear
dynamic processes} (SFB 823, Project C1) of the German Research Foundation (DFG).
 The author wants to thank his supervisor Ursula Gather 
for her constant support and for fruitful discussions.

\end{document}